\documentclass[reqno]{amsproc}
\usepackage[utf8]{inputenc}
\usepackage{amsfonts}
\usepackage{orcidlink}
\usepackage{graphicx}
\usepackage{amscd}
\usepackage{amsmath}
\usepackage{amssymb}
\usepackage{latexsym}
\usepackage{hyperref}
\usepackage[all]{xy}
\usepackage{color}
\usepackage{mathrsfs}
\setcounter{MaxMatrixCols}{30}
\theoremstyle{plain}

\newtheorem{example}{\bf Example}
\newtheorem{lemma}{\bf Lemma}

\newtheorem{proposition}{\bf Proposition}
\newtheorem{remark}{\bf Remark}

\newtheorem{theorem}{\bf Theorem}
\newtheorem{theorem*}{Theorem}{}
\numberwithin{equation}{section}
\newcommand\dv{\mathrm{div}}
\newcommand\dm{\mathrm{dm}}
\newcommand\dM{\mathrm{dM}}
\newcommand\dn{\mathrm{d\mu}}

\newcommand\sym{\mathrm{Sym}}
\newcommand\dmt{\mathrm{dm_t}}
\newcommand\dnt{\mathrm{d\mu_t}}
\newcommand\dg{\mathrm{dm_0}}
\newcommand\db{\mathrm{d\mu_0}}
\newcommand\ricci{\mathrm{Ric}}
\newcommand\drift{\mathrm{L}}
\setcounter{tocdepth}{4}

\begin{document}
\title[On eigenvalues of a class of second order elliptic operators]{Hadamard-type variation formulae for the eigenvalues of a class of second-order elliptic operators and its applications}

\author{C.L. Cunha$^1$, J.N.V. Gomes$^2$\orcidlink{0000-0001-5678-4789}}
\author{M.A.M. Marrocos$^{3,*}$\orcidlink{0000-0001-6336-0693}}

\address{$^1$Instituto de Educação, Agricultura e Ambiente, Universidade Federal do Amazonas, Rua 29 de Agosto, 786, 69.800-000 Humaitá, Amazonas, Brazil.}
\address{$^2$Departamento de Matemática, Universidade Federal de São Carlos, Rod. Washington Luís, Km 235, 13.565-905 São Carlos, São Paulo, Brazil.}
\address{$^3$Departamento de Matem\'atica, Universidade Federal do Amazonas, Av. General Rodrigo Oct\'avio, 6200, 69.080-900 Manaus, Amazonas, Brazil}

\email{$^1$ncleiton@ufam.edu.br} 
\email{$^2$jnvgomes@ufscar.br}
\email{$^3$marcusmarrocos@ufam.edu.br (*corresponding author)}

\urladdr{$^{1,3}$https://www.ufam.edu.br}
\urladdr{$^2$https://www.ufscar.br}

\keywords{Elliptic operator, Hadamard-type formulas, Eigenvalues, Ricci flow}

\subjclass[2020]{Primary 47A05, 47A75; Secondary 47A55, 53E20}

\begin{abstract}
We use variational methods to derive Hadamard-type formulae for the eigenvalues of a class of elliptic operators on a compact Riemannian manifold $M$. We then apply the latter in the following context. Consider a family of elliptic operators which is parametrized by either the set of all $\mathcal{C}^r$--Riemannian metrics on $M$ or the set of all $\mathcal{C}^r$--diffeomorphisms on a domain into $M$. In either case, we prove that if a subset of the parametrizations set yields a simple spectrum of the operator, then it is necessarily a generic subset. We also analyse the behavior of the eigenvalues when the metric evolves along the Ricci flow on a closed Riemannian manifold, and we prove, under a suitable hypothesis, that they increase.
\end{abstract}

\maketitle

\section{Introduction}
Let $(M^n,g)$ be an $n$--dimensional compact Riemannian manifold with boundary $\partial M$, and let $\mathscr{L}$ be the class of elliptic differential operators given by
\begin{equation}\label{defL}
\mathscr{L}f:=\dv_\eta(T\nabla f):=\dv(T\nabla f)-\langle \nabla\eta,T\nabla f\rangle,
\end{equation}
where $T$ is a symmetric positive definite $(1,1)$--tensor on $M^n$ and $f,\eta\in C^{\infty}(M)$. Here, $\dv$ stands for the divergence of smooth vector fields and $\nabla$ for the gradient of smooth functions. We allow both $T$ and $\eta$ depend on the metric $g=\langle,\rangle$. This class of operators unifies several particular cases that are well-studied in the literature, e.g., the Laplace-Beltrami operator, the drifted Laplacian, the Cheng-Yau operator, among other geometric operators which naturally arise in geometric analysis.
For instance, when $T$ is divergence free, then the operator $\mathscr{L}$ is likely to have applications in physics, see, e.g., Serre~\cite{Serre}. We highlight that Serre's work deals with divergence free positive definite symmetric tensors and fluid dynamics, there we can find examples and know where these tensors occur. We refer the reader to Alencar, Neto and Zhou~\cite{AGD}, Araújo Filho and Gomes~\cite{MCAFandGomes}, Gomes and Miranda~\cite{miranda} or Navarro~\cite{Navarro} for more related discussions.

In this paper, we study the behavior of the spectrum of $\mathscr{L}$ by means of Hadamard-type variational formulae for its eigenvalues. It is organized as follows. We give some preliminaries in Section~\ref{prem} and some general comments on variational formulae in Section~\ref{var}. In Section~\ref{Section Dbc}, we consider the eigenvalue problem for the operator $\mathscr{L}_{g}$ with the Dirichlet boundary condition. Theorem~\ref{thm1} deal with the generic simplicity of the eigenvalues of the operator $\mathscr{L}_g(\cdot)= \dv_\eta (T_g \nabla\cdot)$, where the $T_g$--family satisfies the property $\mathcal{P}$. A precise description of this latter property is given in Section~\ref{Section GenDirc}. Taking into account property $\mathcal{P}$, we prove that all eigenvalues of $\mathscr{L}_g$ are generically simple. The perturbation of the domain case is settled by Proposition~\ref{pro domain} in which the $\eta$--dependency on the parameter $t$ is crucial. In Section~\ref{Section Nbc}, we deal with the generic simplicity of the eigenvalues of the operator $\mathscr{L}_g$ with the Neumann boundary condition. In Section~\ref{DV}, we deal with domain variation considering both boundary conditions, Dirichlet and Neumann, and proving that the eigenvalues are generically simple. Especially, Theorem~\ref{thm2} shows that the multiplicity of an eigenvalue can be reduced by small perturbation of the domain. In Section~\ref{ApExtremal}, we deal with extremal metrics. A domain $\Omega$ is local minimizer (local maximizer) for the $k^{th}$--eigenvalue $\mu_k$ of $\mathscr{L}$ if for any analytic volume-preserving deformation $\Omega_t$, the function $t\mapsto \mu_k(t)$ admits a local minimum (local maximum) at $t=0$. For the Dirichlet Laplacian case, it is known that if a domain is local minimizer (resp. local maximizer) for $\mu_k$ with $\mu_k>\mu_{k-1}$ (resp. $\mu_k<\mu_{k+1}$), then the norm $|\partial\phi/\partial\nu|$ of the normal derivative of the eigenfunction $\phi$ of $\mu_k$ is constant (see~\cite{soufi}). We prove a similar result for $\mathscr{L}$ in Theorem \ref{thm3}. In Section~\ref{ApRF}, we analyze the behavior of the eigenvalues of the operator $\mathscr{L}_{g(t)}$ along the Ricci Flow. For instance, we prove a monotonicity for one-parameter family of eigenvalues $\lambda(t)$ of $\mathscr{L}_{g(t)}$ along the Ricci flow on a closed homogeneous $n$--dimensional Riemannian manifold, see Theorem~\ref{rem lm Ricci}. We also prove the monotonicity of the eigenvalues of $\dv_{\eta}(\psi\nabla u)$ envolving by the Ricci flow on three--dimensional Riemannian manifold having strictly positive Ricci curvature initially, see Theorem~\ref{thmMono}. In particular, since the solution to the Ricci flow becomes extinct in finite time, with additional hypothesis over the function $\psi$, we prove that $\lim\limits_{t\rightarrow \delta}\lambda(t)=\infty$, see Theorem~\ref{AAAE}.

It is worth mentioning here some of the main articles related to our work. Uhlenbeck~\cite{uhlenbeck} showed that on a closed Riemannian manifold $(M^n,g)$ there exists a residual subset $\Gamma$ into $\mathcal{M}^r$ for which the spectrum of the Laplace-Beltrami operator $\Delta_g$ is simple, where $\mathcal{M}^r$, $2\leq r <\infty$, stands for the set of all $\mathcal{C}^r$--Riemannian metrics $g$ on $M^n$.

Canzani~\cite{canzani} considered elliptic, formally self-adjoint, conformally covariant operator $P_g$ of order $m$ acting on smooth sections over a closed Riemannian manifold $(M^n,g)$, and proved, under special conditions over the eigenspaces of $P_g$, that the set of all functions $f\in C^\infty(M)$ for which $P_{e^fg}$ has only simple nonzero eigenvalues is a residual subset of $C^\infty(M)$.

El Soufi and Ilias~\cite{soufi} established necessary and sufficient conditions for a domain to be critical, locally minimizing or locally maximizing the $k^{th}$--eigenvalue of the Laplace-Beltrami operator. As an application, they obtained a characterization for critical domains of the trace of the heat kernel under Dirichlet boundary condition.

Cao, Hou and Ling~\cite{Cao_annalen} showed a monotonicity Hadamard-type formula for the first eigenvalue of $-\Delta + aR$ $(0 < a < 1/2)$ on a closed surface with nonnegative scalar curvature $R$ under the Ricci flow. Recall that this latter flow was introduced by Hamilton~\cite{hamilton1} to study the geometry of positive Ricci curvature on three--dimensional closed Riemannian manifolds. Currently, the Ricci flow stands as a powerful tool in studying the geometry and topology for lower dimensional manifolds. 

\section{Preliminaries}\label{prem}

Let $\dm=e^{-\eta}\dM$ be the weighted volume form on $(M^n,g)$, $\dn=e^{-\eta}\mathrm{d\sigma}$ be the weighted area form induced on $\partial M$, where $\dM$ is the Riemannian volume form of $g=\langle,\rangle$, and $\eta:M\to\mathbb{R}$ be a smooth function. If $T$ is a $(0,2)$--tensor on $M^n$, then we can associate it with a unique $(1,1)$--tensor, which will also be denoted $T$, according to $\langle TX,Y\rangle= T(X,Y)$ for all $X,Y\in\mathfrak{X}(M)$. Thus, for an $(1,1)$--tensor $S$, we have
\begin{equation*}
ST(X,Y)= \langle STX,Y\rangle =S(TX,Y).
\end{equation*}

Recall that the divergence of an $(1,1)$--tensor $T$ is the $(0,1)$--tensor
\begin{equation*}
(\dv T)(v)(p) = \mathrm{tr}_g\big(w \mapsto (\nabla_w T)(v)(p)\big),
\end{equation*}
for $p\in M^n$ and $v,w\in T_pM.$ Recall also that the inner product induced by $g$ on the space of $(0,2)$--tensors on $M^n$ is $\langle T,S\rangle=
\mathrm{tr}_g\big(TS^{*}\big)$, where $S^*$ denotes the adjoint tensor of $S$. Clearly, we have in a local coordinates
\begin{equation*}
\langle T,S\rangle =g^{ik}g^{jl}T_{ij}S_{kl}.
\end{equation*}
In this way, if $\nabla^2f=\nabla df$ stands for the Hessian of $f$, then $\Delta_gf=\langle\nabla^2f,g\rangle$. Now, we define
the $(\eta,T_g)$--divergence operator
\begin{equation*}
\mathscr{L}_{g}f=\dv(T_g\nabla f)-T_g(\nabla\eta,\nabla f) = e^{\eta} \dv(e^{-\eta}T\nabla f),
\end{equation*}
where $T_g$ is a symmetric positive definite $(0,2)$--tensor which depends smoothly on $g$.

If we consider the drifted divergence operator $\dv_{\eta}X:=\dv X-\langle\nabla\eta,X\rangle$, then the $(\eta,T_g)$--divergence operator takes the form $\mathscr{L}_gf=\dv_{\eta}(T\nabla f)$, moreover, applying the divergence theorem, one has 
\begin{align*}
&\int_M\dv_{\eta}(TX)\dm=\int_{\partial M}\langle TX,\nu\rangle\dn,
\end{align*}
where $\nu$ is the outward pointing unit normal vector field along $\partial M$. 

Since $\dv_{\eta}(T(fX))=f\dv_\eta(TX)+\langle \nabla f,TX\rangle$  for  $f,\ell\in C^\infty(M)$, we obtain
\begin{align}\label{Formula-IP}
&\int_{M}\ell \dv_{\eta}(T\nabla f)\dm=-\int_{M}T_g(\nabla\ell,\nabla f)\dm+\int_{\partial M}\ell T_g(\nabla f,\nu)\dn
\end{align}
and the following formulae hold
\begin{align}
&\mathscr{L}_g(f\ell) = f\mathscr{L}_g\ell + \ell\mathscr{L}_gf + 2 T_g(\nabla f, \nabla\ell).\nonumber
\end{align}
 
It follows from~\eqref{Formula-IP} that $(\eta,T_g)$--divergence operator is a formally self-adjoint operator in the Hilbert space of all real functions $f\in L^2(M,\dm)$ that vanish (or $T_g(\nabla f,\nu)=0$) on $\partial M$ in the sense of the trace. Thus, the Dirichlet (or $T_g$--Neumann) boundary eigenvalue problem for $\mathscr{L}_{g}$ has real and discrete spectrum 
\begin{equation*}
0 = \mu_{0}(g) < \mu_{1}(g)\leq\mu_{2}(g)\leq \cdots  \leq\mu_{k}(g)\leq\cdots\to\infty,
\end{equation*}
where each $\mu_i(g)$ is repeated according to its multiplicity. Moreover, since $M^n$ is compact there exist two positive constants $\alpha$ and $\beta$, such that $$0<\alpha |Y|^2\leq T_g(Y,Y)\leq \beta |Y|^2,$$ for all $Y \in \mathfrak{X}(M)$. Now, as $\mathscr{L}_{g}$ is a closed operator, we can use perturbation theory for linear operators as in Kato~\cite{kato}.

Let $\mathcal{S}^r(M)$ be the Banach spaces of $C^r$--symmetric $(0,2)$--tensors on $M^n$, and let $\mathcal{M}^r$ be the Banach spaces of $C^r$--Riemannian metrics on $M^n$, $2\leq r<\infty$. A subset of $\mathcal{M}^r$ is called {\it{residual}} if it contains a countable intersection of open dense sets. A property of metrics in $\mathcal{M}^r$ is called {\it{generic}} if it holds on a residual subset.

\section{Variation formulae by means of the metrics}\label{var}

We start by considering $t\mapsto g(t)$ a smooth variation of an initial metric $g$. Let $\dmt=e^{-\eta}\dM_t$ be the weighted volume form of $g(t)$, and $\dnt=e^{-\eta}\mathrm{d\sigma}_t$ be the weighted area form induced on $\partial M$. Denoting by $H$ the $(0,2)$--tensor given by $H_{ij}=\frac{d}{dt}\big|_{t=0}g_{ij}(t)$ and writing $h=g^{ij}H_{ij}$, we get $\frac{d}{dt}\dm_t=\frac{1}{2}h\dm$. Similarly, if $\tilde{H}$ stands for the $(0,2)$--tensor induced by the derivative of $g(t)$ restricted to $\partial M$, and 
$\tilde{h}$ denotes its trace, then $\frac{d}{dt}\dnt = \frac{1}{2}\tilde{h}\dn$. Since there is no danger of confusion, we also write $\langle,\rangle$ for the metric $g(t)$. Thus, for each $X\in \mathfrak{X}(M)$, we can write $X_t=g^{ij}(t)x_i(t)\partial_j$, where $x_i(t)=\langle X,\partial_i\rangle$, and we define $\dot{X}:=g^{ij}x_i'\partial_j$, where $x_i'=\frac{d}{dt}x_i(t)$. 

Now, we consider $t\mapsto T_{g(t)}=:T_t$ to be a smooth variation of the initial symmetric $(0,2)$--tensor $T=T_g$. Then, we obtain the identity
\begin{equation}\label{d/dt T}
\frac{d}{dt}T_t(X_t,Y_t) =T(\dot{X},Y)+T(X,\dot{Y})+\mathscr{H}_T(X,Y),
\end{equation}
where
\begin{equation*}
\mathscr{H}_T:=-(TH+HT)+T'\quad and \quad T'(\partial_i,\partial_j)=\frac{d}{dt}T_t(\partial_i,\partial_j)=(T_t)'_{ij}.
\end{equation*}
Indeed, a straightforward computation gives us 
\begin{equation*}
T_t(X_t,Y_t)=g^{ij}(t)g^{kl}(t)x_i(t)y_k(t)(T_t)_{jl} \quad \mbox{and} \quad  \frac{d}{dt}g^{ij}(t)=-g^{ir}(t)g^{js}(t)H_{rs}(t).
\end{equation*}
Hence
\begin{align*}
\frac{d}{dt}T_t(X_t,Y_t) =&\quad g^{ij}g^{kl}x_i'(t)y_k(t)(T_t)_{jl}+g^{ij}g^{kl}x_i(t)y_k'(t)(T_t)_{jl}\\
&-g^{ir}g^{js}g^{kl}H_{rs}x_iy_k(T_t)_{jl}-g^{ij}g^{kr}g^{ls}H_{rs}x_iy_k(T_t)_{jl} + g^{ij}g^{kl}x_iy_k(T_t)_{jl}'
\end{align*}
which is enough to obtain~\eqref{d/dt T}. 

In the special case of gradient vector fields, we denote by $\nabla_t f$ the gradient of $f\in C^\infty(M)$ computed in the metric $g(t)$. Observe that $\nabla_t f=g^{ij}(t)f_i\partial_j$, where $f_i=\langle \nabla f,\partial_i\rangle=\partial_if$ does not depend on $t$. Thus, from~\eqref{d/dt T} we have 
\begin{equation}\label{eq1lem1}
\frac{d}{dt}T_t(\nabla_t f,\nabla_t\ell)=\mathscr{H}_T(\nabla f,\nabla\ell)
\end{equation}
for all $f,\ell\in C^{\infty}(M)$. In particular,
\begin{equation*}
\frac{d}{dt}\langle\nabla_t f,\nabla_t\ell\rangle=-H(\nabla f,\nabla\ell).
\end{equation*}
Moreover, if we consider the smooth curve $t\mapsto \ell(t)\in C^\infty(M)$ and the smooth vector field $\nu_t:=\frac{\nabla_t f}{|\nabla_t f|}$, then
\begin{equation}\label{Neumann-eq3lem2}
\frac{d}{dt}T_t(\nu_t,\nabla_t \ell(t))=\frac{1}{2}H(\nu,\nu)T(\nu,\nabla \ell)+T(\nu,\nabla \ell')+\mathscr{H}_T(\nu,\nabla \ell).
\end{equation}
Indeed, first note that $\nu_i=\frac{1}{|\nabla f|}\langle\nabla f,\partial_i\rangle$ implies
\begin{equation}
\nu_i'=\frac{1}{2}\frac{1}{|\nabla f|}H(\nu,\nu)\partial_if,
\end{equation}
and then
\begin{equation*}
T(\dot{\nu},\nabla \ell)=\frac{1}{2}H(\nu,\nu)T(\nu,\nabla \ell).
\end{equation*}
Thus, by taking $X_t=\nu_t$ and $Y_t=\nabla_t\ell(t)$ into \eqref{d/dt T}, we obtain \eqref{Neumann-eq3lem2}.

Now, we consider a smooth function $\eta:I\times M\rightarrow\mathbb{R}$ and write $\dot{\eta}:=\frac{d}{dt}\big|_{t=0}\eta(t)$ to define
\begin{equation}\label{def.barL}
\bar{\mathscr{L}}_{t}\phi:=\dv(T_t\nabla \phi) - T_t(\nabla\eta(t),\nabla\phi)
\end{equation}
for all $\phi\in C^{\infty}(M)$. We observe that the family of operators $\bar{\mathscr{L}}_t$ is very convenient to deal with domain deformations.

\begin{lemma}\label{lemEq.bar.L'}
For all $f\in C_{c}^{\infty}(M)$,

\begin{equation}\label{eq.L'}
\bar{\mathscr{L}}'f = \dv_{\eta}(\mathscr{H}_T\nabla f)+\frac{1}{2}T(\nabla h,\nabla f)-T(\nabla\dot{\eta},\nabla f)
\end{equation}
where $\bar{\mathscr{L}}':=\frac{d}{dt}\big|_{t=0}\bar{\mathscr{L}}_{t}$.
\end{lemma}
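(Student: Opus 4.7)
The plan is to characterize $\bar{\mathscr{L}}'f$ weakly, by differentiating at $t=0$ the $t$-dependent version of the integration-by-parts identity~\eqref{Formula-IP}. Since $f\in C_c^\infty(M)$, I may pair $\bar{\mathscr{L}}_t f$ against an arbitrary test function $\ell\in C_c^\infty(M)$ (so that all boundary terms vanish) and use~\eqref{Formula-IP} with $T, \eta, \dv, \nabla, \dm$ replaced by $T_t, \eta(t), \dv_t, \nabla_t, \dmt$ to obtain
\begin{equation*}
\int_M \ell\,\bar{\mathscr{L}}_t f\,\dmt = -\int_M T_t(\nabla_t\ell,\nabla_t f)\,\dmt.
\end{equation*}
The identity $\bar{\mathscr{L}}'f = \dv_\eta(\mathscr{H}_T\nabla f)+\tfrac{1}{2}T(\nabla h,\nabla f)-T(\nabla\dot\eta,\nabla f)$ will follow after differentiating both sides at $t=0$, rearranging, and appealing to the arbitrariness of $\ell$.

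The key variational inputs are: (i) the volume variation $\frac{d}{dt}\big|_{t=0}\dmt = \bigl(\frac{h}{2}-\dot\eta\bigr)\dm$, which combines $\frac{d}{dt}\dM_t|_{t=0}=\frac{h}{2}\dM$ with $\frac{d}{dt}e^{-\eta(t)}|_{t=0}=-\dot\eta\,e^{-\eta}$; and (ii) identity~\eqref{eq1lem1}, which yields $\frac{d}{dt}\big|_{t=0}T_t(\nabla_t\ell,\nabla_t f) = \mathscr{H}_T(\nabla\ell,\nabla f)$ because $\ell$ and $f$ are time-independent. Setting $\psi := \tfrac{h}{2}-\dot\eta$ and using $\bar{\mathscr{L}}_0 = \mathscr{L}_g$, differentiation gives
\begin{equation*}
\int_M \ell\,\bar{\mathscr{L}}'f\,\dm = -\int_M\ell\,\mathscr{L}_g f\,\psi\,\dm - \int_M \mathscr{H}_T(\nabla\ell,\nabla f)\,\dm - \int_M T(\nabla\ell,\nabla f)\,\psi\,\dm.
\end{equation*}

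To recover the claimed form I would integrate by parts in each of the first two terms. Since $\mathscr{H}_T = -(TH+HT)+T'$ is a symmetric $(0,2)$-tensor (the symmetry of $TH+HT$ follows from $T$ and $H$ being self-adjoint with respect to $g$), formula~\eqref{Formula-IP} applies with $T$ replaced by $\mathscr{H}_T$ and converts $-\int_M \mathscr{H}_T(\nabla\ell,\nabla f)\,\dm$ into $\int_M \ell\,\dv_\eta(\mathscr{H}_T\nabla f)\,\dm$. Applying~\eqref{Formula-IP} with test function $\ell\psi$ gives $-\int_M \ell\psi\,\mathscr{L}_g f\,\dm = \int_M \psi\,T(\nabla\ell,\nabla f)\,\dm + \int_M \ell\,T(\nabla\psi,\nabla f)\,\dm$, so the two $T(\nabla\ell,\nabla f)\,\psi$ contributions cancel and only $\int_M \ell\,T(\nabla\psi,\nabla f)\,\dm$ survives; expanding $\nabla\psi = \tfrac{1}{2}\nabla h - \nabla\dot\eta$ produces the remaining two terms on the right-hand side of~\eqref{eq.L'}. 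The main bookkeeping obstacle is keeping the $\eta(t)$-dependence correctly tracked through the weighted density $\dmt$ (this is precisely the feature the paper highlights as crucial for the domain perturbation proposition); all other steps are algebraic applications of~\eqref{Formula-IP} and~\eqref{eq1lem1}.
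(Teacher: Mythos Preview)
Your proof is correct and follows essentially the same approach as the paper: both characterize $\bar{\mathscr{L}}'f$ weakly by differentiating the integration-by-parts identity at $t=0$ and then cleaning up via the product rule for $\dv_\eta$. The only cosmetic difference is that the paper first treats the case where $\eta$ is $t$-independent (so $\dmt=e^{-\eta}\dM_t$ and $\psi=h/2$) and then appends the correction $-T(\nabla\dot\eta,\nabla f)$ via the identity $\bar{\mathscr{L}}'f=\mathscr{L}'f-T(\nabla\dot\eta,\nabla f)$, whereas you absorb the $\dot\eta$ contribution directly into the density variation $\psi=\tfrac{h}{2}-\dot\eta$; the algebra is otherwise identical.
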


\begin{proof}
We begin by considering $\eta$ to be independent of $t$. In this case, $\bar{\mathscr{L}}_t$ becomes $\mathscr{L}_t$. Integration by parts formula gives us
\begin{equation*}
\int_{M}\ell \mathscr{L}_{t}f\dm_t=-\int_{M}T_t(\nabla\ell,\nabla f) \dm_t
\end{equation*}
for $\ell\in C_{c}^{\infty}(M)$. Hence, from equation \eqref{eq1lem1}, we have at $t=0$
\begin{equation}\label{eq2lem1}
\int_M\ell \mathscr{L}'f\dm = \int_M\Big(-\frac{1}{2}\ell h\mathscr{L}f -\mathscr{H}_T(\nabla f,\nabla \ell)-\frac{1}{2}hT(\nabla\ell,\nabla f) \Big)\dm.
\end{equation}
Observe that
\begin{equation}\label{eq3lem1}
\dv_{\eta}(\mathscr{H}_T(\ell\nabla f))=\ell\dv_{\eta}(\mathscr{H}_T\nabla f)+\mathscr{H}_T(\nabla f,\nabla \ell)
\end{equation}
and
\begin{equation}\label{eq4lem1}
\dv_{\eta}(\ell hT\nabla f)=\ell h\mathscr{L}f+\ell T(\nabla h,\nabla f)+hT(\nabla\ell,\nabla f).
\end{equation}
Plugging \eqref{eq3lem1} and \eqref{eq4lem1} in \eqref{eq2lem1}, we obtain
\begin{equation*}
\int_{M}\ell \mathscr{L}'f \dm=\int_M \ell\left(\frac{1}{2}T(\nabla h,\nabla f) +\dv_{\eta}(\mathscr{H}_T\nabla f)\right)\dm,
\end{equation*}
which is sufficient to prove~\eqref{eq.L'} for this particular case. For the general case, it is enough to note that
\begin{equation}\label{def.barL0}
\bar{\mathscr{L}}'f = \mathscr{L}'f-T(\nabla\dot{\eta},\nabla f)
\end{equation}
to conclude the proof of the lemma.
\end{proof}

\section{Hadamard-type variation formula}\label{Section Dbc}

In what follows, we assume that all manifolds are oriented and those that are compact we assume to have boundary. The closed manifolds are assumed to be compact without boundary.

As in the previous section, we continue by considering $t\mapsto T_{g(t)}=:T_t$ a smooth variation of the initial symmetric $(0,2)$--tensor $T=T_g$, and the family of operators $\bar{\mathscr{L}}_t$ as in \eqref{def.barL}, besides, we denote $d\bar m_t =e^{-\eta(t)}dM_t$.

The next result is an extension  of Lemma $3.15$ in Berger \cite{berger} for the $(\eta,T_g)$--divergence operator $\mathscr{L}_{g}$.
\begin{lemma}\label{LemExist}
Let $(M,g)$ be a compact Riemannian manifold. Consider a real analytic one-parameter family of Riemannian metrics $g(t)$ on $M$ with $g=g(0)$. If $\lambda$ is an eigenvalue of multiplicity $m>1$ for $\mathscr{L}_{g}$, then
there exist $\varepsilon>0$, scalars $\lambda_{i}$ ($i=1,\dots,m$) and functions $\phi_{i}$ varying analytically in $t$, such that, for
all $|t|<\varepsilon$ the following holds:
\begin{enumerate}
\item $\lambda_{i}(0)=\lambda$;
\item $\{\phi_{i}(t)\}$ is orthonormal in $L^2(M,d\bar m_t)$;
\item $\left\{\begin{array}{ccccc}
               -\bar{\mathscr{L}}_{t}\phi_{i}(t)&=&\lambda_{i}(t)\phi_{i}(t)& \mbox{in} & M\\
               \phi_i(t)&=&0 &\mbox{on} &\partial M.
               \end{array}\right.$
\end{enumerate}
\end{lemma}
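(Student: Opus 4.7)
The plan is to reduce to Kato's analytic perturbation theory for self-adjoint operators with compact resolvent, acting on a single fixed Hilbert space. The obstruction is that each $\bar{\mathscr{L}}_t$ is self-adjoint on the $t$-dependent space $L^2(M,d\bar m_t)$, so the one-parameter family does not act on a common space; I will first remove this by a unitary conjugation, then verify Kato's analyticity hypotheses for the conjugated family, and finally transfer the conclusion back.

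I would introduce the density $\rho_t := d\bar m_t/d\bar m_0 = e^{\eta(0)-\eta(t)}\sqrt{\det g(t)/\det g(0)}$, which is real analytic in $t$ and uniformly positive on $M$ for $|t|$ small, by analyticity of $g(t)$ and $\eta(t)$. Then $U_t u := \rho_t^{-1/2}u$ defines a unitary isomorphism $U_t\colon L^2(M,d\bar m_0)\to L^2(M,d\bar m_t)$ that preserves the Dirichlet trace. Setting $\tilde{\mathscr{L}}_t := U_t^{-1}\bar{\mathscr{L}}_t U_t$, one obtains a closed operator on $L^2(M,d\bar m_0)$ with the $t$-independent domain $D := H^2(M)\cap H^1_0(M)$. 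Because the coefficients of $\bar{\mathscr{L}}_t$ depend analytically on $t$ and $\rho_t$ is analytic and strictly positive, $\tilde{\mathscr{L}}_t$ is a real analytic self-adjoint family of type (A) in Kato's sense, and the uniform ellipticity $\alpha|Y|^2\le T_{g(t)}(Y,Y)\le\beta|Y|^2$ (available for small $|t|$ by continuity) ensures compact resolvent.

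Next I would invoke the Kato--Rellich theorem for self-adjoint analytic families with compact resolvent (see Kato~\cite{kato}): since $\lambda$ is an eigenvalue of multiplicity $m$ of $\tilde{\mathscr{L}}_0 = \mathscr{L}_g$, the total spectral projection onto the eigenspaces bifurcating from $\lambda$ depends analytically on $t$, yielding $m$ real analytic branches $\lambda_i(t)$ with $\lambda_i(0)=\lambda$ together with a real analytic orthonormal basis $\tilde\phi_i(t)\in D$ of the corresponding invariant subspace, satisfying $\tilde{\mathscr{L}}_t\tilde\phi_i(t)=\lambda_i(t)\tilde\phi_i(t)$ and $\tilde\phi_i(t)|_{\partial M}=0$. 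Setting $\phi_i(t):=U_t\tilde\phi_i(t)$ then produces the required family: unitarity of $U_t$ keeps $\{\phi_i(t)\}$ orthonormal in $L^2(M,d\bar m_t)$, the Dirichlet condition is preserved, $\bar{\mathscr{L}}_t\phi_i(t)=\lambda_i(t)\phi_i(t)$ holds by construction, and analyticity of $U_t$ in $t$ transfers the analyticity of $\tilde\phi_i$ to $\phi_i$.

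The main obstacle will be verifying that $\tilde{\mathscr{L}}_t$ really is a type (A) analytic family in Kato's sense, that is, that $t\mapsto\tilde{\mathscr{L}}_t u$ is a real analytic $L^2(M,d\bar m_0)$-valued map for each fixed $u\in D$. Because $\rho_t>0$ is analytic and $\bar{\mathscr{L}}_t$ is a second-order operator, the conjugation $U_t^{-1}\bar{\mathscr{L}}_t U_t$ introduces only additional first- and zeroth-order correction terms whose coefficients inherit analyticity from $\rho_t$, $g(t)$, $T_t$, and $\eta(t)$, so the verification reduces to routine bookkeeping in local coordinates; once this is in hand, Kato's theorem delivers statements (1)--(3) simultaneously.
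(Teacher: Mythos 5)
Your proposal is correct and follows essentially the same route as the paper: both conjugate $\bar{\mathscr{L}}_t$ by a multiplication operator to obtain a $t$-analytic self-adjoint family of type (A) on a fixed Hilbert space and then invoke Kato's analytic perturbation theorem before transferring the eigenpairs back. If anything your density factor $\rho_t^{-1/2}$ is the more careful choice, since the paper's stated $P_t(u)=\bigl(\det g_{ij}/\det g_{ij}(t)\bigr)^{1/4}u$ omits the $e^{(\eta(t)-\eta(0))/2}$ correction needed when $\eta$ depends on $t$.
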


\begin{proof}
Write $g(z)$ and $T_z$ for the extensions of $g(t)$ and $T_t$ to a domain $D_0$ of the complex plane $\mathbb{C}$, respectively. So, we can naturally extend the operator $\bar{\mathscr{L}}_t$ to
\begin{equation*}
\bar{\mathscr{L}}_{z}:=\bar{\mathscr{L}}_{a+ib}=\bar{\mathscr{L}}_{a}+i\bar{\mathscr{L}}_{b}.
\end{equation*}

We observe that the domain $D=H^2(M)\cap H_0^1(M)$ of $\bar{\mathscr{L}}_{z}$ do not depend on $z$, and since $M$ is compact, for any two Riemannian metrics $g_1,g_2$ the associated Sobolev norms $||.||_{W^{1,2}(M,g_i)}$ in $H^2(M)$ are equivalent. Moreover, the map $z\mapsto \bar{\mathscr{L}}_{z}\phi$ is holomorphic for $z\in D_0$ and for every $\phi\in D$. Thus, $\bar{\mathscr{L}}_{z}$ is a holomorphic family of type $(A)$ as in Kato~\cite{kato}. Once the family $\bar{\mathscr{L}}_{z}$ is not self-adjoint in its domain, we can consider the isometry $P_t:L^2(M,\dm)\rightarrow L^2(M,d\bar{m}_t)$ given by
\begin{equation*}
P_t(u)=\frac{e^{-\eta(0)/2}\sqrt[4]{det(g_{ij}(0))}}{e^{-\eta(t)/2}\sqrt[4]{det(g_{ij}(t))}}u,
\end{equation*}
so that each operator $\tilde{\mathscr{L}_{t}}:=P_t^{-1}\circ \bar{\mathscr{L}}_{t}\circ P_t$ is self-adjoint in $L^2(M,\dm)$. Note that $P^{-1}_t(u)=\frac{e^{-\eta(t)/2}\sqrt[4]{det(g_{ij}(t))}}{e^{-\eta(0)/2}\sqrt[4]{det(g_{ij}(0))}}u$ and 
$d\bar{m}_t=\frac{e^{-\eta(t)}\sqrt[2]{det(g_{ij}(t))}}{e^{-\eta(0)}\sqrt[2]{det(g_{ij}(0))}} \dm$. Hence,
\begin{align*}
\int_{M}v\tilde{\mathscr{L}_{t}}u\dm =& \int_{M}vP_t^{-1}\circ \bar{\mathscr{L}}_{t}\circ P_t(u)\dm= \int_{M}(P_t^{-1}\circ P_t(v))(P_t^{-1}\circ \bar{\mathscr{L}}_{t}\circ P_t(u))\dm \\
=& \int_{M}(P_t(v))( \bar{\mathscr{L}}_{t}\circ P_t(u)) \frac{e^{-\eta(t)}\sqrt[2]{det(g_{ij}(t))}}{e^{-\eta(0)}\sqrt[2]{det(g_{ij}(0))}}\dm\\
=&\int_{M}P_t(v)\bar{\mathscr{L}}_{t}P_t(u)d\bar{m}_{t}=\int_{M}P_t(u)\bar{\mathscr{L}}_{t}P_t(v)d\bar{m}_{t}\\
=&\int_{M}(P_t^{-1}\circ P_t(u))(P_t^{-1}\circ\bar{\mathscr{L}}_{t}\circ P_t(v))\dm=\int_{M}u\tilde{\mathscr{L}_{t}}v\dm.
\end{align*}
Evidently, $\tilde{\mathscr{L}_{t}}$ share the same spectrum with $\bar{\mathscr{L}}_t$. So, applying Theorem~3.9 in \cite{kato} for the operator $\tilde{\mathscr{L}_{t}}$ we obtain the families of eigenvalues and eigenfunctions with the required properties.
\end{proof}

We now consider a smooth map $\mathcal{F}:\mathcal{M}^r\rightarrow \mathcal{S}_+^2(M)$ that associates to each $g\in\mathcal{M}^r$ a symmetric positive definite tensor $\mathcal{F}(g)=T_{g}$. Naturally, $d\mathcal{F}_{g}$ stands for the differential of $\mathcal{F}$ at $g$, and $d\mathcal{F}_{g}^{\ast}$ for the adjoint of $d\mathcal{F}_{g}$. The next proposition constitutes the first Hadamard-type variational formula.
\begin{proposition}\label{pro bar-L}
Let $(M,g)$ be a compact Riemannian manifold and $g(t)$ be a smooth variation of $g$. Consider $\{\phi_i(t)\}\subset C^{\infty}(M)$ a differentiable family of functions and $\lambda_i(t)$ a differentiable family of real numbers
such that $\lambda_i(0)=\lambda$ for each $i=1,\ldots,m$ and for all $t$
\begin{equation*}
\left\{
\begin{array}{ccccc}
    -\bar{\mathscr{L}}_t\phi_i(t) &=& \lambda_i(t)\phi_i(t) & \mbox{in} & M\\
    \phi_i(t)&=&0 & \mbox{on} & \partial M,
\end{array}
\right.
\end{equation*}
with $\langle\phi_i(t),\phi_j(t)\rangle_{L^2(M,d\bar{m}_t)}=\delta_{ij}$. Then, we have
\begin{align}\label{eq.lm2}
    (\lambda_i+\lambda_j)'\delta_{ij} =& \int_{M}\big\langle\frac{1}{2}\bar{\mathscr{L}}(\phi_i\phi_j)g-2(T\nabla \phi_i)^\flat\otimes d\phi_j-2d\phi_i\otimes(T\nabla \phi_j)^\flat, H\big\rangle\dm\nonumber\\
     & +\int_M \Big[2\langle d\mathcal{F}_{g}^{\ast}\sym(d\phi_i\otimes d\phi_j) ,H\rangle +T(\nabla\dot{\eta},\nabla(\phi_{i}\phi_{j}))\Big]\dm,
\end{align}
where $(T\nabla\phi_i)^\flat$ is the $1$--form associated to $T\nabla\phi_i$ induced by $g$, and $\sym$ is the symmetrization operator.
\end{proposition}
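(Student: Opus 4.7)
The plan is to begin from the weak form of the eigenvalue equation. Pair $-\bar{\mathscr{L}}_t\phi_i(t)=\lambda_i(t)\phi_i(t)$ against $\phi_j(t)$ and integrate against $d\bar m_t$. The Dirichlet condition $\phi_i(t)|_{\partial M}=0$ together with the integration-by-parts formula~\eqref{Formula-IP} (applied in the metric $g(t)$) gives
\[
\int_{M} T_t(\nabla_{t}\phi_{i},\nabla_{t}\phi_{j})\,d\bar m_t \;=\; \lambda_i(t)\delta_{ij} \;=\; \tfrac{1}{2}\bigl(\lambda_i(t)+\lambda_j(t)\bigr)\delta_{ij},
\]
the second equality being tautological. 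Differentiating at $t=0$ reproduces the left-hand side of~\eqref{eq.lm2} up to a factor of $\tfrac12$, so the task reduces to expanding and rewriting the derivative of the integral on the left.

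For that derivative I would use the time-dependent version of~\eqref{d/dt T}: since $\nabla_t\phi_i$ varies with $t$ both through $g^{ij}(t)$ and through $\phi_i(t)$, one obtains
\[
\tfrac{d}{dt}\big|_{0} T_t(\nabla_{t}\phi_{i},\nabla_{t}\phi_{j}) \;=\; T(\nabla\dot\phi_i,\nabla\phi_j) + T(\nabla\phi_i,\nabla\dot\phi_j) + \mathscr{H}_T(\nabla\phi_i,\nabla\phi_j),
\]
with $\dot\phi_i=\tfrac{d}{dt}|_0\phi_i(t)$. Combined with $\tfrac{d}{dt}|_0 d\bar m_t = (\tfrac h2-\dot\eta)\,dm$ (since $d\bar m_t=e^{-\eta(t)}dM_t$), the derivative of the quadratic form becomes an integral of these four pieces against $dm$.

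Next I would eliminate the $\dot\phi$-terms. Since $\phi_i(t)$ and $\phi_j(t)$ vanish on $\partial M$ for every $t$, so do their $t$-derivatives; a second use of~\eqref{Formula-IP} therefore gives $\int T(\nabla\dot\phi_i,\nabla\phi_j)\,dm = \lambda\int\dot\phi_i\phi_j\,dm$, and symmetrically for the companion term. Differentiating the orthonormality relation $\int\phi_i\phi_j\,d\bar m_t=\delta_{ij}$ at $t=0$ yields $\int(\dot\phi_i\phi_j+\phi_i\dot\phi_j)\,dm = -\int\phi_i\phi_j(\tfrac h2-\dot\eta)\,dm$, so the $\dot\phi$-contributions collapse to $-\lambda\int\phi_i\phi_j(\tfrac h2-\dot\eta)\,dm$.

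What remains is bookkeeping. Four identities should convert the result into the exact form of~\eqref{eq.lm2}: (i) $\langle\alpha\otimes\beta,H\rangle=H(\alpha^{\sharp},\beta^{\sharp})$, which turns $\langle-2(T\nabla\phi_i)^{\flat}\otimes d\phi_j-2d\phi_i\otimes(T\nabla\phi_j)^{\flat},H\rangle$ into $-2(HT+TH)(\nabla\phi_i,\nabla\phi_j)$; (ii) the adjoint relation $\langle d\mathcal{F}_g(H),S\rangle=\langle H,d\mathcal{F}_g^{\ast}S\rangle$ with $S=\sym(d\phi_i\otimes d\phi_j)$, which rewrites the $T'$-part of $\mathscr{H}_T=-(TH+HT)+T'$ as $\langle d\mathcal{F}_g^{\ast}\sym(d\phi_i\otimes d\phi_j),H\rangle$; (iii) the product rule $\mathscr{L}(\phi_i\phi_j)=-2\lambda\phi_i\phi_j+2T(\nabla\phi_i,\nabla\phi_j)$ at $t=0$, which absorbs the $-\lambda h\phi_i\phi_j$ and $h\,T(\nabla\phi_i,\nabla\phi_j)$ terms into $\tfrac12\langle\bar{\mathscr{L}}(\phi_i\phi_j)g,H\rangle$; and (iv) the Dirichlet integration by parts $\int T(\nabla\dot\eta,\nabla(\phi_i\phi_j))\,dm=-\int\dot\eta\,\mathscr{L}(\phi_i\phi_j)\,dm$, which shows that the residual $\dot\eta$-contributions $2\lambda\dot\eta\phi_i\phi_j-2\dot\eta\,T(\nabla\phi_i,\nabla\phi_j)$ reassemble exactly into $\int T(\nabla\dot\eta,\nabla(\phi_i\phi_j))\,dm$. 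The main obstacle is precisely this final repackaging: the measure-variation pieces (the $\tfrac h2$ and the $\dot\eta$), the $\dot\phi$-elimination via orthonormality, and the decomposition of $\mathscr{H}_T$ must all land in harmony, and it is easy to miscount a factor of $2$ or a sign while converting $\tfrac h2\,\mathscr{L}(\phi_i\phi_j)$ into $\tfrac12\langle\bar{\mathscr{L}}(\phi_i\phi_j)g,H\rangle$.
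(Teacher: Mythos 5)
Your argument is correct and reaches~\eqref{eq.lm2}, but it takes a genuinely different route from the paper's. The paper differentiates the eigenvalue \emph{equation} $-\bar{\mathscr{L}}_t\phi_i(t)=\lambda_i(t)\phi_i(t)$ pointwise at $t=0$, pairs it against $\phi_j$, uses formal self-adjointness to eliminate the $\phi_i'$ term, and invokes Lemma~\ref{lemEq.bar.L'} for the explicit formula for $\bar{\mathscr{L}}'$ before arriving at~\eqref{eq.LB}. You instead differentiate the \emph{quadratic form} identity $\int_M T_t(\nabla_t\phi_i,\nabla_t\phi_j)\,d\bar m_t=\lambda_i(t)\delta_{ij}$ and use the $t$-derivative of the orthonormality relation to absorb the $\dot\phi$-contributions; this bypasses Lemma~\ref{lemEq.bar.L'} entirely and never requires the expression for $\bar{\mathscr{L}}'$ as an operator. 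The two routes are logically equivalent, and the bookkeeping you were worried about does close: from your intermediate expression one gets $\tfrac12(\lambda_i+\lambda_j)'\delta_{ij}=\int_M(\tfrac h2-\dot\eta)\bigl(T(\nabla\phi_i,\nabla\phi_j)-\lambda\phi_i\phi_j\bigr)\dm+\int_M\mathscr{H}_T(\nabla\phi_i,\nabla\phi_j)\dm$, and since $\bar{\mathscr{L}}(\phi_i\phi_j)=2T(\nabla\phi_i,\nabla\phi_j)-2\lambda\phi_i\phi_j$ at $t=0$ this becomes, after multiplying by $2$, exactly~\eqref{eq.LB}; the $\dot\eta$-piece converts to $\int_M T(\nabla\dot\eta,\nabla(\phi_i\phi_j))\dm$ by~\eqref{Formula-IP} because $\nabla(\phi_i\phi_j)=\phi_i\nabla\phi_j+\phi_j\nabla\phi_i$ vanishes on $\partial M$ under the Dirichlet condition, so the boundary term in the integration by parts drops out. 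What the paper's approach buys is Lemma~\ref{lemEq.bar.L'} as a reusable building block (it is invoked again for the Neumann case via Proposition~\ref{Neumann-prop3}); what your approach buys is a shorter, more Rayleigh-quotient-flavoured derivation that does not need the derivative of the operator as a separately stated formula.
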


\begin{proof}
Differentiating with respect to the variable $t$ the equation
\begin{equation*}
-\bar{\mathscr{L}}_{t}\phi_i(t)=\lambda_i(t)\phi_i(t)
\end{equation*}
at $t=0$, one has
\begin{equation*}
-\bar{\mathscr{L}}'\phi_i-\bar{\mathscr{L}}\phi'_i=\lambda_i'\phi_i+\lambda\phi'_i.
\end{equation*}
Multiplying by $\phi_j$ and using equation~\eqref{def.barL0}, we get 
\begin{equation*}
-\phi_j\mathscr{L}'\phi_i +\phi_jT(\nabla\dot{\eta},\nabla \phi_i) -\phi_j\bar{\mathscr{L}}\phi'_i = \lambda_i'\phi_j\phi_i-\phi_{i}'\bar{\mathscr{L}}\phi_{j}.
\end{equation*}
Using integration by parts, we obtain
\begin{equation*}
\lambda_i'\delta_{ij}= - \int_{M}\phi_j\mathscr{L}'\phi_i \dm +\int_{M} T(\nabla \dot{\eta}, \phi_j\nabla \phi_i)\dm.
\end{equation*}
So, we deduce from \eqref{eq.L'} that
\begin{align*}
(\lambda_i+\lambda_j)'\delta_{ij} =& -\int_{M}\phi_j\mathscr{L}'\phi_i \dm -\int_{M}\phi_i\mathscr{L}'\phi_j \dm +\int_{M} T(\nabla \dot{\eta}, \nabla(\phi_j\phi_i))\dm\\
=&-\int_{M}\!\! \Big[\frac{1}{2}T(\nabla h,\nabla(\phi_i\phi_j)) +\phi_j\dv_{\eta}(\mathscr{H}_{T}\nabla\phi_i) +\phi_i\dv_{\eta}(\mathscr{H}_{T}\nabla\phi_j)\Big]\dm\\
& +\int_{M} T(\nabla \dot{\eta}, \nabla(\phi_j\phi_i))\dm.
\end{align*}
Again, integration   by parts implies
\begin{equation}\label{eq.LB}
(\lambda_i+\lambda_j)'\delta_{ij} =\int_{M}\left(\frac{h}{2}\bar{\mathscr{L}}(\phi_i\phi_j) +2\mathscr{H}_{T}(\nabla\phi_i,\nabla\phi_j) +T(\nabla \dot{\eta}, \nabla(\phi_j\phi_i))\right)\dm.
\end{equation}
Now, we compute
\begin{eqnarray*}
&&\mathscr{H}_{T}(\nabla\phi_i,\nabla\phi_j)\\
&=&-T(H\nabla\phi_i,\nabla\phi_j)-H(T\nabla\phi_i,\nabla\phi_j)+\langle T',d\phi_i\otimes d\phi_j \rangle\\
&=&\langle-(T\nabla \phi_i)^\flat\otimes d\phi_j- d\phi_i\otimes(T\nabla \phi_j)^\flat, H\rangle+\langle d\mathcal{F}_g(H),\sym(d\phi_i\otimes d\phi_j) \rangle\\
&=&\langle-(T\nabla \phi_i)^\flat\otimes d\phi_j- d\phi_i\otimes(T\nabla \phi_j)^\flat +(d\mathcal{F}_g)^\ast\sym(d\phi_i\otimes d\phi_j), H\rangle.
\end{eqnarray*}
We substitute this into~\eqref{eq.LB} to complete the proof.
\end{proof}

We point out that Hadamard-type variation formula~\eqref{eq.lm2} generalizes the Berger formula in \cite{berger}. Moreover, we highlight that equation \eqref{eq.LB} will be useful in different ways in this paper.

\section{On generic properties of eigenvalues}\label{Section GenDirc}
Here, we give an application of the Hadamard-type formula that we have obtained in the previous section.

Let $\mathcal{F}:\mathcal{M}^r\rightarrow \mathcal{S}_+^2(M)$ be a smooth map that associates to each $g\in\mathcal{M}^r$ a symmetric positive definite tensor $\mathcal{F}(g)=T_{g}$ such that $\mathscr{L}_g(\cdot)=\dv_{\eta}(T_g\nabla\cdot)$ is uniformly elliptic. We say that the $T_g$--family satisfies the property $\mathcal{P}$ if, for all $g\in\mathcal{M}^r$, the tensor $G_g=(n-4)T_g +2d\mathcal{F}_g(g)$ satisfies either $G_g(X,X)> 0$ or $G_g(X,X)<0$ or $G_g(X,X)\equiv 0$ for all $X\in\mathfrak{X}(M)$. For example, if $\psi$ is a positive smooth function then $T_g=\psi g$ satisfies the property $\mathcal{P}$. 

\begin{theorem}\label{thm1}
Let $(M,g_0)$ be a compact Riemannian manifold. Consider a family of operators $\mathscr{L}_g(\cdot)=\dv_{\eta}(T_g\nabla\cdot)$ where the $T_g$--family satisfies the property $\mathcal{P}$, and let $\lambda$ be an eigenvalue of the problem
\begin{equation}\label{problem.var metric}
\left\{
\begin{array}{ccccc}
-\mathscr{L}_{g_0}\phi&=& \lambda\phi &\hbox{in}& M\\
\phi&=&0 &\hbox{on}& \partial M
\end{array}
\right.
\end{equation}
with multiplicity $m>1$. Then given any neighborhood $U\subset\mathcal{M}^r$ of $g_0$ and $\epsilon> 0$, there exists $g\in U$ and a simple eigenvalue $\lambda(g)$ such that  $|\lambda(g)-\lambda(g_0)|< \epsilon$. In particular, the subset $\Gamma\subset \mathcal{M}^r$ where the eigenvalues of \eqref{problem.var metric} are simple, is residual. 
\end{theorem}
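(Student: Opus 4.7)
The residualness of $\Gamma$ reduces to the first (density) assertion by a standard Baire-category argument: set $O_k \subset \mathcal{M}^r$ to be the collection of metrics whose first $k$ Dirichlet eigenvalues of $\mathscr{L}_g$ are all simple. Continuous dependence of the spectrum makes each $O_k$ open, and iterated application of the density assertion (first reducing the multiplicity of $\mu_1$, then of $\mu_2$, etc.) makes each $O_k$ dense; then $\Gamma = \bigcap_{k \ge 1} O_k$ is residual. I therefore concentrate on the density claim.

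For density I argue by contradiction. Suppose that some neighborhood $U$ of $g_0$ contains no metric with a simple eigenvalue near $\lambda$, so the multiplicity of an eigenvalue close to $\lambda$ remains at least $m$ throughout $U$. Fix an orthonormal basis $\{\phi_1,\ldots,\phi_m\}$ of the $\lambda$-eigenspace at $g_0$. For each $H \in \mathcal{S}^r(M)$, the polynomial path $g(t) = g_0 + tH$ activates Lemma~\ref{LemExist} and produces analytic branches $\lambda_i(t), \phi_i(t)$. The derivatives $\lambda_i'(0)$ are precisely the eigenvalues of the symmetric $m \times m$ matrix $Q(H)$ obtained (in the fixed basis) from the polarized Hadamard form underlying \eqref{eq.lm2}, and persistence of multiplicity forces $Q(H)$ to be a scalar multiple of $I_m$ for every $H$; that is, for all $i,j$ and all $H$,
\[
\int_M \bigl\langle A_{ij}, H \bigr\rangle \dm + \int_M T(\nabla \dot\eta, \nabla(\phi_i\phi_j)) \dm = c(H)\,\delta_{ij},
\]
where $A_{ij}$ denotes the symmetric $(0,2)$-tensor built from $\phi_i, \phi_j, T_g, d\mathcal{F}_g$ that appears on the right-hand side of \eqref{eq.lm2}.

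To extract pointwise information I specialize to conformal perturbations $H = fg$ with $f \in C^\infty_c(M^\circ)$ arbitrary, so that $\langle A_{ij}, fg \rangle = f \cdot \mathrm{tr}_g A_{ij}$. A direct calculation using $\mathrm{tr}_g g = n$, the symmetry of $T_g$, the adjoint identity $\mathrm{tr}_g(d\mathcal{F}_g^{*} S) = \langle d\mathcal{F}_g(g), S \rangle$, and the product rule $\mathscr{L}_{g_0}(\phi_i \phi_j) = -2\lambda\, \phi_i\phi_j + 2 T_g(\nabla \phi_i, \nabla \phi_j)$ yields
\[
\mathrm{tr}_g A_{ij} = -n\lambda\, \phi_i\phi_j + G_g(\nabla \phi_i, \nabla \phi_j),
\]
with $G_g = (n-4)T_g + 2\, d\mathcal{F}_g(g)$ exactly the tensor of property $\mathcal{P}$. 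The $\dot\eta$-term is integrated by parts: the boundary contribution vanishes because $\phi_i\phi_j = 0$ and $\nabla(\phi_i\phi_j) = 0$ on $\partial M$, and what remains is again a scalar integrand paired with $f$. The arbitrariness of $f$ then produces the pointwise identity on $M$,
\[
-n\lambda\, \phi_i\phi_j + G_g(\nabla \phi_i, \nabla \phi_j) = c(x)\,\delta_{ij},
\]
for some scalar function $c$ on $M$.

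Property $\mathcal{P}$ now delivers the contradiction. If $G_g \equiv 0$, the identity reads $-n\lambda\,\phi_i\phi_j = c(x)\,\delta_{ij}$: the diagonal entries make $\phi_i^2$ independent of $i$, while the off-diagonal entries force $\phi_i\phi_j \equiv 0$ for $i \neq j$; together these compel $\phi_i \equiv 0$, contradicting the non-triviality of the eigenfunctions. If $G_g$ is positive (or negative) definite, I evaluate the identity at any boundary point $p \in \partial M$: since $\phi_i(p) = 0$ and the tangential derivatives of $\phi_i$ vanish on $\partial M$, one has $\nabla \phi_i(p) = a_i \nu$ with $a_i = (\partial \phi_i/\partial \nu)(p)$, so the left side becomes the rank-one matrix $G_g(\nu,\nu)(p)\, a_i a_j$, while the right side is $c(p)\, I_m$. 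For $m \ge 2$ this forces $c(p) = 0$ and $a_i = 0$ for every $i$; thus each $\phi_i$ has vanishing Cauchy data on $\partial M$, and Aronszajn's unique continuation yields $\phi_i \equiv 0$, again a contradiction. The chief obstacle in this plan is the clean reduction of the integrated identity to its pointwise form in the presence of the $\dot\eta$-contribution (which depends linearly on $H$ through the $g$-dependence of $\eta$); once that reduction is secured, the trichotomy provided by property $\mathcal{P}$ closes all three cases uniformly.
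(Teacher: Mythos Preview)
Your overall contradiction-plus-Hadamard strategy matches the paper's, and you land on the same scalar identity $n\lambda\,\phi_i\phi_j = G_g(\nabla\phi_i,\nabla\phi_j)$ (the paper reaches it by letting $H$ range over \emph{all} symmetric tensors to kill the full tensor integrand and then tracing; you go straight to conformal $H=fg$ and read off the trace). The genuine divergence is in the endgame for the definite case. The paper runs Uhlenbeck's interior dynamical argument: flow along $\alpha'(s)=G_g\nabla\phi_i$, set $\beta=\phi_j\circ\alpha$, obtain $\beta'=n\lambda(\phi_i\circ\alpha)\beta$, and force $\beta$ to blow up on a compact manifold. You instead exploit the Dirichlet boundary: at any $p\in\partial M$ the left side collapses to the rank-one matrix $G_g(\nu,\nu)(p)\,a_ia_j$, which cannot equal $c(p)I_m$ for $m\ge2$ unless every $a_i=0$; zero Cauchy data plus unique continuation then kills all $\phi_i$. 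Your argument is clean and avoids the delicate points of the integral-curve approach (critical points of $\phi_i$, trajectories reaching $\partial M$), but it lives entirely on the boundary and would not survive on a closed manifold, whereas the paper's interior argument does---which matters since the paper later recycles the same machinery for the $T_g$--Neumann problem. For $G_g\equiv0$ you use diagonal-plus-off-diagonal algebra ($\phi_i^2$ independent of $i$ together with $\phi_i\phi_j=0$ forces $\phi_i\equiv0$), whereas the paper uses only the off-diagonal relation and unique continuation; both are fine. Finally, the $\dot\eta$ obstacle you flag is legitimate, but note that the paper's own proof of this theorem simply omits that term, i.e.\ it treats $\eta$ as independent of $g$; under that reading your reduction to a pointwise identity goes through without further work.
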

\begin{proof}
We will argue by contradiction. Let $U\subset\mathcal{M}^r$ be a neighborhood of $g_0$ such that $\forall g\in U$ with $|\lambda(g)-\lambda(g_0)|< \epsilon$  the multiplicity of $\lambda(g)$ is $m$. Consider $g(t)=g_0+tH$, where $H$ is any symmetric $(0,2)$--tensor on $(M^n,g(t))$ with  $\lambda(t)$ and $\phi(t)$ given by Lemma \ref{LemExist} such that
\begin{equation*}
\left\{
\begin{array}{ccccc}
-\mathscr{L}_t\phi_i(t)&=& \lambda(t)\phi_i(t) &\hbox{in}& M\\
\phi_i(t)&=&0 &\hbox{on}& \partial M.
\end{array}
\right.
\end{equation*}
By Proposition \ref{pro bar-L}
\begin{align*}
\lambda'\delta_{ij} =& \int_{M}\langle\frac{1}{4}\mathscr{L}(\phi_i\phi_j)g_0-\sym((T\nabla \phi_i)^\flat\otimes d\phi_j), H\rangle\dm\nonumber\\
     &+\int_M \langle-\sym(d\phi_i\otimes(T\nabla \phi_j)^\flat)+ d\mathcal{F}_{g_0}^{\ast}\sym(d\phi_i\otimes d\phi_j) ,H\rangle\dm.
\end{align*}
If $i\neq j$, we have
\begin{eqnarray}\label{eq S}
0&=&\frac{1}{4}\mathscr{L}(\phi_i\phi_j)g_0-\sym((T\nabla \phi_i)^\flat\otimes d\phi_j)-\sym(d\phi_i\otimes(T\nabla \phi_j)^\flat)\nonumber\\
&&+ d\mathcal{F}_{g_0}^{\ast}\sym(d\phi_i\otimes d\phi_j).
\end{eqnarray}
Furthermore, taking traces in equation \eqref{eq S}, we get
\begin{eqnarray}\label{eq-aux thm1}
0&=&n\mathscr{L}(\phi_i\phi_j)-8T(\nabla\phi_i,\nabla\phi_j)+ 4d\mathcal{F}_{g_0}(g_0)(\nabla\phi_i,\nabla\phi_j)\nonumber\\
&=&-2\lambda n\phi_i\phi_j+2\big((n-4)T +2d\mathcal{F}_{g_0}(g_0)\big)(\nabla\phi_i,\nabla\phi_j).
\end{eqnarray}
We so obtain
\begin{equation}\label{UI}
\lambda n\phi_i\phi_j=G_g(\nabla\phi_i,\nabla\phi_j).
\end{equation}
Motivated by a similar argument given by Uhlenbeck~\cite{uhlenbeck} we fix $p\in M$ and consider an integral curve $\alpha$ in $M$ such that $\alpha(0)=p$ and $\alpha'(s)=G_g\nabla\phi_i(\alpha(s))$. Defining $\beta(s):=\phi_j(\alpha(s))$, we compute
\begin{eqnarray*}
\beta'(s)&=&\langle \nabla\phi_j(\alpha(s)),\alpha'(s)\rangle =G_g(\nabla\phi_j,\nabla\phi_i)(\alpha(s))=\lambda n\phi_{i}(\alpha(s))\phi_{j}(\alpha(s))\\
&=&\lambda n\phi_{i}(\alpha(s))\beta(s).
\end{eqnarray*}
This gives us that $\beta(s)=ce^{n\lambda\int_{0}^{s}\phi_i(\alpha(t))dt}$, where $c>0.$ If $G_g$ is positive, we have that $\phi_i(\alpha(s))$ is increasing in $s$ and then $\beta(s)\nearrow\infty$, which is a contradiction, since $M$ is compact. If $G_{g}$ is negative, the argument is analogous. If $G_g\equiv0$, then from equation~\eqref{eq-aux thm1} we get $\phi_i\phi_j=0$, hence, by the principle of the unique continuation (see~\cite{homander}) we have that at least one of the eigenfunctions vanishes, which is again a contradiction. It proves the first part of the theorem.

Now, let $\Gamma_m$ be the set of metrics $g\in\mathcal{M}^r$ such that the first $m$ eigenvalues of $\mathscr{L}_g$ with respect to Problem \eqref{problem.var metric} are simple. It is known that those eigenvalues depend continually of the metric (see \cite{bando}), hence $\Gamma_m$ is open in $\mathcal{M}^r$. On the other hand, it follows from the first part of the theorem that $\Gamma_m$ is dense in $\mathcal{M}^r$. Since $\mathcal{M}^r$ is a complete metric space on the topology
$\mathcal{C}^r$, the set $\Gamma = \cap_{m=1}^{\infty}\Gamma_m$ is residual.
\end{proof}

\section{Neumann boundary eigenvalue problem}\label{Section Nbc}

Our goal in this section is to prove the generic properties for eigenvalues of the operator $\mathscr{L}$ with the Neumann boundary condition. More specifically,
\begin{equation}\label{Neumann-plrp}
\left\{\begin{array}{ccccc}
(\mathscr{L}+ \lambda)\phi &=& 0& \text{in}& M\\
T(\nabla\phi,\nu) &=& 0 & \text{on}& \partial M,
\end{array}\right.
\end{equation}
where $\nu$ is the outward pointing unit normal vector field along $\partial M$. 
\subsection{Hadamard-type variation formulas}

Here, we proceed as in Section~\ref{Section Dbc}. Let us consider a smooth family of Riemannian metrics $g(t)$ and $T_t=T_{g(t)}$. 
\begin{proposition}\label{Neumann-prop3} The following holds for any $ f,\ell\in C^{\infty}(M).$
\begin{equation*}
\int_{M}\ell\bar{\mathscr{L}}' f \dm =\int_M \ell\Big(\frac{1}{2}T(\nabla h,\nabla f)+ \dv_\eta(\mathscr{H}_T\nabla f)-T(\nabla\dot{\eta},\nabla f)\Big)\dm,
\end{equation*}
where $\bar{\mathscr{L}}':=\frac{d}{dt}\big|_{t=0}\bar{\mathscr{L}}_{g(t)}.$
\end{proposition}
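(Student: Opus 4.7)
The proposition is, in essence, the integrated form of the pointwise identity already established in Lemma \ref{lemEq.bar.L'}, lifted from $C_c^\infty(M)$ to $C^\infty(M)$. My plan is to observe that the identity of Lemma \ref{lemEq.bar.L'} is pointwise, note that both sides are local differential expressions in $f$, and then extend the identity to arbitrary $f \in C^\infty(M)$ before multiplying by $\ell$ and integrating.

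First, I would emphasize that $\bar{\mathscr{L}}'$ is a second-order linear differential operator on $M$: it is the $t$-derivative at $t=0$ of the family $\bar{\mathscr{L}}_t = \dv(T_t\nabla\cdot) - T_t(\nabla\eta(t),\nabla\cdot)$, whose coefficients depend smoothly on $(t,p)$. Consequently $(\bar{\mathscr{L}}'f)(p)$ is determined by the 2-jet of $f$ at $p$. By inspection, the right-hand side
\begin{equation*}
\tfrac{1}{2}T(\nabla h,\nabla f) + \dv_\eta(\mathscr{H}_T\nabla f) - T(\nabla\dot\eta,\nabla f)
\end{equation*}
is also a pointwise expression in the 2-jet of $f$ at $p$, with coefficients depending only on $g$, $h$, $T$, $\mathscr{H}_T$, and $\dot\eta$.

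Second, I would upgrade Lemma \ref{lemEq.bar.L'}, whose conclusion is a pointwise equality but whose hypothesis imposes $f\in C_c^\infty(M)$, to arbitrary $f \in C^\infty(M)$. Given any interior point $p \in M\setminus\partial M$, pick a bump function $\chi \in C_c^\infty(M)$ equal to $1$ on some neighborhood $U$ of $p$ with $\overline{U}\subset M\setminus\partial M$; then $\chi f \in C_c^\infty(M)$ and $\chi f \equiv f$ on $U$. Since both sides of the desired pointwise identity depend only on the 2-jet at $p$, Lemma \ref{lemEq.bar.L'} applied to $\chi f$ yields equality at $p$ for $f$ itself. As $p$ ranges over the interior and both sides are continuous on $M$, the identity extends to all of $M$, including $\partial M$.

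Third, multiplying this pointwise identity by $\ell \in C^\infty(M)$ and integrating with respect to $\dm$ yields precisely the formula claimed by the proposition. The main obstacle is essentially cosmetic: verifying that the localization argument in step two is legitimate, which it is because Lemma \ref{lemEq.bar.L'} already proves a pointwise equality rather than merely a weak identity against compactly supported test functions. Alternatively, one could forgo the pointwise route and directly rerun the integration-by-parts computation from Lemma \ref{lemEq.bar.L'} with $\ell$ not compactly supported; the new boundary integrals produced by differentiating $\int_M \ell \mathscr{L}_t f\, \dm_t = -\int_M T_t(\nabla_t\ell,\nabla_t f)\,\dm_t + \int_{\partial M}\ell T_t(\nabla_t f,\nu_t)\,\dnt$ in $t$ must then be shown to cancel, but this is a less efficient way of arriving at the same conclusion.
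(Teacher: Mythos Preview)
Your argument is correct, and it differs from the paper's. The paper reruns the integration-by-parts computation of Lemma~\ref{lemEq.bar.L'} without the compact-support hypothesis: it differentiates the identity
\[
\int_{M}\ell\mathscr{L}_t f \,\dmt = -\int_{M}T_t(\nabla_t f,\nabla_t\ell)\,\dmt + \int_{\partial M}\ell\, T_t(\nabla_t f,\nu_t)\,\dnt
\]
at $t=0$, uses \eqref{Neumann-eq3lem2} for the boundary term, and then observes that the resulting boundary integrals assemble into $\frac{1}{2}\int_{\partial M}\ell\big(-h+H(\nu,\nu)+\tilde h\big)T(\nabla f,\nu)\,\dn$, which vanishes because $\tilde h = h - H(\nu,\nu)$. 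In other words, the paper follows exactly the ``alternative'' you sketch in your last paragraph and explicitly carries out the boundary cancellation.

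Your primary route---exploiting that both $\bar{\mathscr{L}}'$ and the right-hand side are local in the $2$-jet of $f$, then lifting Lemma~\ref{lemEq.bar.L'} from $C_c^\infty(M)$ to $C^\infty(M)$ via a bump-function argument and continuity to $\partial M$---is shorter and more conceptual; it actually yields the stronger pointwise identity for all $f\in C^\infty(M)$, from which the integrated statement is immediate. The paper's route, by contrast, makes the boundary bookkeeping explicit, which is natural in the Neumann setting of Section~\ref{Section Nbc} and feeds directly into the later variation formulae where boundary terms genuinely survive.
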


\begin{proof}
As in the proof of Lemma~\ref{lemEq.bar.L'}, we begin by considering $\eta$ to be independent of $t$. In this case, $\bar{\mathscr{L}}_t$ becomes $\mathscr{L}_t$. By integration by parts
\begin{equation*}
\int_{M}\ell\mathscr{L}_t f \dmt=-\int_{M}T(\nabla f,\nabla\ell) \dmt + \int_{\partial M}\ell T(\nabla f,\nu_t)\dnt.
\end{equation*}
Thus, from \eqref{Neumann-eq3lem2} at $t=0$,
\begin{align*}
\int_M\ell\mathscr{L}' f \dm + \frac{1}{2}\int_M\ell h\mathscr{L}  f \dm=&-\int_M\mathscr{H}_T(\nabla  f,\nabla \ell)\dm -\frac{1}{2}
\int_MhT(\nabla f,\nabla\ell) \dm\\
&+\int_{\partial M}\ell\Big(\mathscr{H}_T(\nu,\nabla  f)+\frac{1}{2}H(\nu,\nu)T(\nabla f,\nu)\Big)\dn\\
&+\int_{\partial M}\ell\frac{\tilde{h}}{2}T(\nabla  f,\nu) \dn.
\end{align*}
 Rearranging the above equation, we have
\begin{align}\label{Neumann-eq2lem1}
\nonumber\int_M\ell\mathscr{L}' f \dm =&-\int_M\mathscr{H}_T(\nabla  f,\nabla \ell)\dm +\int_{\partial M}\ell \mathscr{H}_T(\nu,\nabla f)\dn \\
&-\frac{1}{2}\int_M\big(hT(\nabla f,\nabla\ell)+\ell h\mathscr{L}  f\big)\dm\nonumber\\
&+\frac{1}{2}\int_{\partial M}\ell\big(\tilde{h}+H(\nu,\nu)\big)T(\nabla f,\nu) \dn.
\end{align}
Finally, plugging \eqref{eq3lem1} and \eqref{eq4lem1} in \eqref{Neumann-eq2lem1}, we obtain
\begin{align*}
\int_{M}l\mathscr{L}' f \dm =&\int_M \ell\Big(\frac{1}{2}T(\nabla h,\nabla f)+\dv_\eta(\mathscr{H}_T\nabla f)\Big)\dm \\
& + \frac{1}{2}\int_{\partial M}\ell\big(-h + H(\nu,\nu)+\tilde{h}\big)T(\nabla f,\nu) \dn,
\end{align*}
for all $\ell\in C^{\infty}(M)$, which is sufficient for completing the proof of our proposition, since $\tilde{h}=\mathrm{tr}_g(H|_{\partial M})=
h - H(\nu,\nu)$. For the general case, we use equation~\eqref{def.barL0} to conclude the proof.
\end{proof}

Below we will obtain Hadamard-type formulas for the eigenvalues of $\mathscr{L}$ with $T_g$--Neumann boundary condition.
\begin{proposition}\label{Neumann-pro1}
Let $(M,g_0)$ be a compact Riemannian manifold, and $g(t)$ be a smooth variation of $g_0$. Consider $\{\phi_i(t)\}\subset C^{\infty}(M)$ a differentiable family of functions and $\lambda_i(t)$ a differentiable family of real numbers such that $\lambda_i(0)=\lambda$ for each $i=1,\ldots,m$ and for all $t$
\begin{equation*}
\left\{
  \begin{array}{ccccc}
    -\bar{\mathscr{L}}_{g(t)}\phi_{i}(t) &=& \lambda_i(t)\phi_{i}(t) & \text{in}& M\\
    T_t(\nu_t,\nabla\phi_{i}(t))&=&0 & \text{on}& \partial M,
  \end{array}
\right.
\end{equation*}
which $\langle\phi_i(t),\phi_j(t)\rangle_{L^2(M,\dm_t)}=\delta_{ij}$. Then, we obtain the following variation formula
\begin{align*}
(\lambda_i+\lambda_j)'\delta_{ij} =& \int_{M}\langle\frac{1}{2}\mathscr{L}_{g_0}(\phi_i\phi_j)g_0-2(T_{g_0}\nabla \phi_i)^\flat\otimes d\phi_j- 2d\phi_i\otimes
(T_{g_0}\nabla \phi_j)^\flat, H\rangle\dm\nonumber\\
& +\int_M \Big[2\langle d\mathcal{F}_{g_0}^{\ast}\sym(d\phi_i\otimes d\phi_j) ,H\rangle + T(\nabla \dot{\eta}, \nabla (\phi_i\phi_j))\Big]\dm,
\end{align*}
where $(T_{g_0}\nabla\phi_i)^\flat$ is the $1$-form associated to $T_{g_0}\nabla\phi_i$ induced by $g_0$, and $\sym$ denotes the symmetrization operator.
\end{proposition}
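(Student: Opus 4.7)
The plan is to adapt the proof strategy of Proposition~\ref{pro bar-L} to the Neumann setting, where the main issue is managing the boundary contributions that no longer vanish from a Dirichlet condition. I would start by differentiating the eigenvalue equation $-\bar{\mathscr{L}}_{g(t)}\phi_i(t)=\lambda_i(t)\phi_i(t)$ at $t=0$, obtaining $-\bar{\mathscr{L}}'\phi_i-\bar{\mathscr{L}}_{g_0}\phi'_i=\lambda'_i\phi_i+\lambda\phi'_i$, multiplying by $\phi_j$ and integrating against $\dm$. Two applications of formula~\eqref{Formula-IP}, together with the Neumann condition $T(\nabla\phi_j,\nu)=0$ and the eigenvalue equation for $\phi_j$, reduce the $\phi'_i$ contribution to a single residual boundary integral:
\begin{equation*}
\lambda'_i\delta_{ij} = -\int_M \phi_j\,\bar{\mathscr{L}}'\phi_i\,\dm - \int_{\partial M}\phi_j\,T(\nabla\phi'_i,\nu)\,\dn.
\end{equation*}

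The next key step is to evaluate $T(\nabla\phi'_i,\nu)$ on $\partial M$ by differentiating the boundary condition $T_t(\nu_t,\nabla_t\phi_i(t))=0$ at $t=0$. Invoking~\eqref{Neumann-eq3lem2} with $\ell(t)=\phi_i(t)$ and using $T(\nabla\phi_i,\nu)=0$ at $t=0$ yields $T(\nabla\phi'_i,\nu)=-\mathscr{H}_T(\nu,\nabla\phi_i)$ on $\partial M$. I then apply Proposition~\ref{Neumann-prop3} to rewrite $\int_M \phi_j\,\bar{\mathscr{L}}'\phi_i\,\dm$ and integrate by parts on the $\dv_\eta(\mathscr{H}_T\nabla\phi_i)$ piece. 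This produces a boundary term $\int_{\partial M}\phi_j\,\mathscr{H}_T(\nabla\phi_i,\nu)\,\dn$ that cancels exactly the one produced by the Neumann variation (by symmetry of $\mathscr{H}_T$), leaving a purely interior identity. The companion piece $\tfrac{1}{2}T(\nabla h,\nabla(\phi_i\phi_j))$ can also be integrated by parts with vanishing boundary contribution, since $T(\nabla(\phi_i\phi_j),\nu)=\phi_j T(\nabla\phi_i,\nu)+\phi_i T(\nabla\phi_j,\nu)=0$ on $\partial M$.

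Averaging with the $(i,j)$-swapped identity then delivers
\begin{equation*}
(\lambda_i+\lambda_j)'\delta_{ij} = \int_M\!\Big(\tfrac{h}{2}\mathscr{L}_{g_0}(\phi_i\phi_j) + 2\mathscr{H}_T(\nabla\phi_i,\nabla\phi_j) + T(\nabla\dot\eta,\nabla(\phi_i\phi_j))\Big)\dm,
\end{equation*}
which is precisely the interior identity~\eqref{eq.LB} obtained in the Dirichlet case. From this point the remainder of the argument is purely algebraic and can be copied verbatim from the end of the proof of Proposition~\ref{pro bar-L}: expand $\mathscr{H}_T=-(TH+HT)+T'$, rewrite $h=\langle g_0,H\rangle$, express the $T'$ contribution in terms of $d\mathcal{F}_{g_0}$ via the adjoint $d\mathcal{F}_{g_0}^{\ast}$, and repackage the integrand as the inner product of $H$ with the symmetric $(0,2)$-tensor displayed in the statement.

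The main obstacle I anticipate is the careful bookkeeping of boundary integrals, since unlike the Dirichlet setting nothing forces them to vanish a priori; the whole argument hinges on the exact cancellation $T(\nabla\phi'_i,\nu)+\mathscr{H}_T(\nu,\nabla\phi_i)=0$ on $\partial M$, which is simply the linearization of the Neumann condition. Once this identity is in hand, every boundary term that appears on the way to \eqref{eq.LB} is accounted for, and the Neumann formula collapses to the same closed interior expression as in the Dirichlet case, so no further analytic input is required.
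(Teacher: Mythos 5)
Your proposal is correct and follows essentially the same route as the paper: differentiate the eigenvalue equation, isolate the residual boundary term $\int_{\partial M}\phi_j T(\nabla\phi'_i,\nu)\,\dn$, replace $T(\nabla\phi'_i,\nu)$ via the linearization of the Neumann condition from~\eqref{Neumann-eq3lem2}, substitute Proposition~\ref{Neumann-prop3}, and observe that the $\mathscr{H}_T$ boundary contributions cancel by symmetry, reducing to the same interior identity~\eqref{eq.LB-TgN} as in the Dirichlet case, after which the algebra from Proposition~\ref{pro bar-L} applies verbatim. Your bookkeeping of the boundary cancellation $T(\nabla\phi'_i,\nu)+\mathscr{H}_T(\nu,\nabla\phi_i)=0$ and the vanishing of the $T(\nabla h,\nabla(\phi_i\phi_j))$ boundary term is exactly the substance of the paper's argument.
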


\begin{proof}
Taking the derivative with respect to $t$ in both sides of the identity
\begin{equation*}
-\bar{\mathscr{L}}_{g(t)} \phi_{i}(t)=\lambda_i(t)\phi_{i}(t),
\end{equation*}
one has at $t=0$
\begin{equation*}
-\bar{\mathscr{L}}'\phi_i-\bar{\mathscr{L}}\phi'_i=\lambda_i'\phi_i+\lambda\phi'_i.
\end{equation*}
Thus
\begin{equation*}
-\int_{M}(\phi_j\bar{\mathscr{L}}'\phi_i+\phi_j\bar{\mathscr{L}}\phi'_i)\dm
=\int_{M}(\lambda_i'\phi_j\phi_i-\phi_i'\bar{\mathscr{L}}\phi_j)\dm.
\end{equation*}
Now, since $T(\nu_t,\nabla_t\phi_{i}(t))=0$ on $\partial M$, we deduce from~\eqref{Neumann-eq3lem2} that
\begin{equation*}
T(\nu,\nabla\phi'_i)=-\mathscr{H}_T(\nu,\nabla\phi_i).
\end{equation*}
Moreover, integration by parts gives
\begin{align*}
\lambda_i'\delta_{ij}=& - \int_{M}\phi_j\bar{\mathscr{L}}'\phi_i \dm - \int_{\partial M}\phi_jT(\nu,\nabla\phi_i')\dn\\
 =& - \int_{M}\phi_j\bar{\mathscr{L}}'\phi_i \dm + \int_{\partial M}\phi_j\mathscr{H}_T(\nu,\nabla\phi_i)\dn.
\end{align*}
Whence,
\begin{align*}
-(\lambda_i+\lambda_j)'\delta_{ij} =& \int_{M}\phi_j\bar{\mathscr{L}}'\phi_i \dm + \int_{M}\phi_i\bar{\mathscr{L}}'\phi_j \dm - \int_{\partial M}\phi_i\mathscr{H}_T
(\nu,\nabla\phi_j)\dn \\
&- \int_{\partial M}\phi_j\mathscr{H}_T(\nu,\nabla\phi_i)\dn\\
 =&\int_{M} \frac{1}{2}T(\nabla h,\nabla(\phi_i\phi_j))\dm + \int_{M}\phi_i\dv_\eta(\mathscr{H}_T\nabla\phi_j)\dm\\
 &+\int_{M}\phi_j\dv_\eta (\mathscr{H}_T\nabla\phi_i)\dm +\int_{M} T(\nabla \dot{\eta}, \nabla (\phi_i\phi_j))\dm\\
 &- \int_{\partial M}\phi_i\mathscr{H}_T(\nu,\nabla\phi_j)\dn - \int_{\partial M}\phi_j\mathscr{H}_T(\nu,\nabla\phi_i)\dn.
\end{align*}
Next, we use the divergence theorem to compute
\begin{equation}\label{eq.LB-TgN}
(\lambda_i+\lambda_j)'\delta_{ij}= \int_{M}\frac{h}{2}\bar{\mathscr{L}}(\phi_i\phi_j)\dm + 2 \int_{M}\mathscr{H}_T(\nabla\phi_i,\nabla\phi_j) \dm+\int_{M} T(\nabla \dot{\eta}, \nabla (\phi_i\phi_j))\dm
\end{equation}
or equivalently
\begin{align*}
(\lambda_i+\lambda_j)'\delta_{ij} =& 2\int_{M}\langle\frac{1}{4}\mathscr{L}(\phi_i\phi_j)g-(T\nabla \phi_i)^\flat\otimes d\phi_j- d\phi_i\otimes
(T\nabla \phi_j)^\flat, H\rangle\dm\nonumber\\
& +2\int_M \langle d\mathcal{F}_g^{\ast}\sym(d\phi_i\otimes d\phi_j) ,H\rangle\dm+\int_{M} T(\nabla \dot{\eta}, \nabla (\phi_i\phi_j))\dm
\end{align*}
as in Proposition \ref{pro bar-L}.
\end{proof}

Now, we will prove the existence of analytic curves of eigenvalues.
\begin{proposition}\label{Neumann-thmExistence}
Let $(M, g_0)$ be a compact Riemannian manifold and $g(t)$ be a real analytic one-parameter family of Riemannian metrics on $M$ with $g(0)=g_0$. Assume $\lambda$ is an eigenvalue of multiplicity $m$ for the operator $\mathscr{L}_{g_0}$ with $T_{g_0}$--Neumann boundary condition. Then, there exist $\varepsilon>0$, and $t$-analytic functions $\lambda_{i}(t)$ and
$\phi_{i}(t)$, with $i=1,\dots,m$ and $|t|<\varepsilon$, such that  $\langle\phi_{i}(t),\phi_{j}(t)\rangle_{L^2(M,\dmt)}=\delta_i^j$,  $\lambda_{i}(0)=\lambda$ and
\begin{equation*}
\left\{
\begin{array}{cccl}
-\mathscr{L}_t\phi_{i}(t) &=&\lambda_{i}(t)\phi_{i}(t)  & \text{in } M\\
T_t(\nabla\phi_i(t),\nu_t)  &=& 0 & \text{on } \partial M.
\end{array}
\right.
\end{equation*}
\end{proposition}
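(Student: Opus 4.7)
The plan is to mirror the strategy used for Lemma~\ref{LemExist}, but to handle the $t$-dependent boundary condition $T_t(\nabla\phi,\nu_t)=0$, which is the principal new difficulty. First I would complexify the parameter: extend $g(t)$, $T_t$ and $\eta(t)$ analytically to a complex disk $D_0\subset\mathbb{C}$ containing the origin, and define $\bar{\mathscr{L}}_z=\bar{\mathscr{L}}_{a}+i\bar{\mathscr{L}}_{b}$ for $z=a+ib\in D_0$. Because $T_0$ is positive definite and varies analytically, by shrinking $D_0$ we may assume $T_z$ remains strictly accretive, so the associated quadratic form is still sectorial.

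Next I would work through the associated sesquilinear forms rather than the operators directly, so as to use a form domain that does not depend on $t$. Define, for $z\in D_0$,
\begin{equation*}
\mathfrak{t}_z(u,v)=\int_M T_z(\nabla u,\nabla v)\,e^{-\eta(z)}\,dM_z,\qquad u,v\in H^1(M).
\end{equation*}
The form-domain $H^1(M)$ is a fixed Hilbert space, the family $z\mapsto\mathfrak{t}_z(u,v)$ is holomorphic for each pair $u,v\in H^1(M)$, and since $T_z$ is uniformly accretive on a small neighborhood of $0$, the family $\{\mathfrak{t}_z\}$ is a holomorphic family of sectorial forms in the sense of Kato (type (B)). By the representation theorem, the associated $m$-sectorial operators $\bar{\mathscr{L}}_z$ have domains encoding exactly the $T_z$-Neumann condition $T_z(\nabla\phi,\nu_z)=0$, even though those operator domains themselves depend on $z$. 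Kato's Theorem~VII.4.2 guarantees that $\{\bar{\mathscr{L}}_z\}$ is thereby a holomorphic family in the generalized sense.

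To pass to a genuinely self-adjoint analytic family on a fixed space, I would then reuse the isometry $P_t\colon L^2(M,\dm)\to L^2(M,d\bar{m}_t)$ defined by the Jacobian factor $\sqrt[4]{\det g_{ij}/\det g_{ij}(t)}$, exactly as in Lemma~\ref{LemExist}. Setting $\tilde{\mathscr{L}}_t=P_t^{-1}\circ\bar{\mathscr{L}}_t\circ P_t$ yields a family of operators on $L^2(M,\dm)$, self-adjoint for real $t$ (the computation in the proof of Lemma~\ref{LemExist} uses only the formal self-adjointness of $\bar{\mathscr{L}}_t$ in $L^2(M,d\bar{m}_t)$, which from \eqref{Formula-IP} holds for Neumann functions as well since the boundary integrand $\ell T_t(\nabla f,\nu_t)$ vanishes for both $f$ and $\ell$ satisfying $T_t(\cdot,\nu_t)=0$). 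The conjugation preserves holomorphy in $z$, so $\{\tilde{\mathscr{L}}_z\}$ is a holomorphic family of self-adjoint operators (in the generalized sense) on the fixed Hilbert space $L^2(M,\dm)$, sharing spectrum with $\bar{\mathscr{L}}_z$.

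Finally, applying Kato's theorem (Theorem~VII.3.9 of~\cite{kato}) for self-adjoint holomorphic families to the $m$-fold eigenvalue $\lambda$ of $\tilde{\mathscr{L}}_0=\mathscr{L}_{g_0}$ produces $\varepsilon>0$ and real-analytic curves $\lambda_i(t)$ and $\tilde\phi_i(t)\in L^2(M,\dm)$ with $\lambda_i(0)=\lambda$, $\tilde{\mathscr{L}}_t\tilde\phi_i(t)=\lambda_i(t)\tilde\phi_i(t)$ and $\langle\tilde\phi_i(t),\tilde\phi_j(t)\rangle_{L^2(M,\dm)}=\delta_{ij}$. Setting $\phi_i(t)=P_t\tilde\phi_i(t)$ transports these back, yielding orthonormality in $L^2(M,\dnt)$ and the Neumann eigenvalue equation $-\mathscr{L}_t\phi_i(t)=\lambda_i(t)\phi_i(t)$ with $T_t(\nabla\phi_i(t),\nu_t)=0$ on $\partial M$. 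The principal obstacle is really the form-theoretic step: ensuring that the $t$-dependent Neumann boundary condition is absorbed into the form and that the resulting family still qualifies as holomorphic in Kato's sense; once this is settled, the rest of the argument runs in parallel with Lemma~\ref{LemExist}.
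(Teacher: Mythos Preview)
Your approach is correct but takes a genuinely different route from the paper. The paper does \emph{not} attempt to fit the Neumann problem into Kato's holomorphic-family machinery directly; instead it proceeds by Liapunov--Schmidt reduction. Concretely, it first builds (via the implicit function theorem) an analytic $m$-dimensional frame $\{\varphi_j(t)\}$ of functions satisfying the $T_t$-Neumann condition, then reduces the eigenvalue problem to $\det A(t,\lambda)=0$ for a finite symmetric matrix $A_{ij}(t,\lambda)=\int_M\varphi_i(\mathscr{L}_t+\lambda)(\varphi_j+w_j)\,\dmt$, and finally invokes Rouch\'e, Puiseux, and Kato's Selection theorem on this matrix problem to extract the analytic curves $\lambda_i(t)$, $\phi_i(t)$.

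Your form-theoretic argument is cleaner: by moving to the sesquilinear forms $\mathfrak{t}_z$ on the fixed domain $H^1(M)$ you absorb the $t$-dependent Neumann condition as the natural boundary condition and obtain a type~(B) holomorphic family, so Kato's self-adjoint perturbation theorem applies in one stroke. This avoids the two-stage implicit-function-theorem construction and the finite-dimensional reduction entirely. The paper's approach, on the other hand, is more explicit and yields along the way the auxiliary Proposition~\ref{Neumann-tcaln} (exact count of nearby eigenvalues), and stays closer to the hands-on style used elsewhere in the paper. Both are valid; yours is shorter if one is willing to quote the type~(B) machinery.

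One small slip: in your last paragraph you write ``orthonormality in $L^2(M,\dnt)$''; this should be $L^2(M,\dmt)$, the weighted volume space, not the boundary space.
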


The following remark is due at this point. The main technique utilized in the proof of Proposition~\ref{Neumann-thmExistence} is the Liapunov-Schmidt method. In particular, we will follow along the same lines of proof as in Henry \cite{henry}, Marrocos and Pereira~\cite{marrocos}, and Gomes and Marrocos~\cite{gomes}, for they already successfully employed this method in contexts similar to ours. The proof of Proposition~\ref{Neumann-thmExistence} requires the following proposition.

\begin{proposition}\label{Neumann-tcaln}
Let $(M , g_0)$ be a compact Riemannian manifold, and $\lambda_0$ be an eigenvalue of multiplicity $m>1$ of the operator $\mathscr{L}$ with $T_{g_0}$--Neumann boundary condition. Then for every $\epsilon>0$ there is $\delta>0$ so that for each
$|t|<\delta$, there exist exactly $m$ eigenvalues (counting their multiplicities) to Problem~\eqref{Neumann-plrp} in the interval
$(\lambda_0-\epsilon,\lambda_0+\epsilon)$.
\end{proposition}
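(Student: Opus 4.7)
The plan is to adapt the holomorphic-family argument used in Lemma~\ref{LemExist} to the Neumann setting. The principal new difficulty, compared with the Dirichlet case treated there, is that both the boundary condition $T_t(\nabla\phi,\nu_t)=0$ and the inner product on $L^2(M,d\bar{m}_t)$ depend on $t$, so $\bar{\mathscr{L}}_t$ cannot directly be viewed as a family of operators on a fixed Hilbert space with a fixed domain. The remedy is to work at the level of sesquilinear forms. I would define
\begin{equation*}
a_t(\phi,\psi):=\int_M T_t(\nabla\phi,\nabla\psi)\,d\bar{m}_t
\end{equation*}
on the $t$-independent form domain $H^1(M)$. An integration by parts along the lines of~\eqref{Formula-IP} shows that $\phi\in H^2(M)$ satisfies $-\bar{\mathscr{L}}_t\phi=\lambda\phi$ together with $T_t(\nabla\phi,\nu_t)=0$ on $\partial M$ if and only if $a_t(\phi,\psi)=\lambda\langle\phi,\psi\rangle_{L^2(M,d\bar{m}_t)}$ for every $\psi\in H^1(M)$; in this weak formulation the Neumann boundary condition is implicit and the form domain no longer depends on $t$.

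Next, I would conjugate by the isometry $P_t:L^2(M,d\bar{m}_0)\to L^2(M,d\bar{m}_t)$ introduced in the proof of Lemma~\ref{LemExist}, and consider the pulled-back form $\tilde a_t(u,v):=a_t(P_tu,P_tv)$ on the fixed Hilbert space $L^2(M,d\bar{m}_0)$, still with form domain $H^1(M)$ (since $P_t$ is multiplication by a smooth positive function). The uniform ellipticity bounds $\alpha|Y|^2\le T_g(Y,Y)\le\beta|Y|^2$ recorded in Section~\ref{prem} persist for all $T_t$ with $t$ sufficiently small, so the $\tilde a_t$ are closed, symmetric, bounded-below quadratic forms varying analytically in $t$; hence by Kato~\cite{kato} they define a holomorphic family of type~(B) of self-adjoint operators $\tilde{\mathscr{L}}_t$ on $L^2(M,d\bar{m}_0)$ whose spectrum coincides with that of $\bar{\mathscr{L}}_t$ with the $T_t$-Neumann boundary condition.

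Finally, I would apply the standard Riesz-projection argument. Let $\Gamma$ be a small positively oriented circle in $\mathbb{C}$ centered at $-\lambda_0$ of radius smaller than $\epsilon$ that encloses no other eigenvalue of $-\mathscr{L}_{g_0}$. For $|t|<\delta$ with $\delta$ small enough, $\Gamma$ lies in the resolvent set of $-\tilde{\mathscr{L}}_t$ and the resolvent depends continuously on $t$ in operator norm, so
\begin{equation*}
P(t)=\frac{1}{2\pi i}\oint_\Gamma (\zeta+\tilde{\mathscr{L}}_t)^{-1}\,d\zeta
\end{equation*}
is a norm-continuous family of orthogonal projections. Since $P(0)$ has rank $m$ and the rank of a norm-continuous family of projections is locally constant, $P(t)$ has rank $m$ for all $|t|<\delta$, which means $\bar{\mathscr{L}}_t$ has exactly $m$ eigenvalues (counted with multiplicity) in $(\lambda_0-\epsilon,\lambda_0+\epsilon)$.

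I expect the main obstacle to be verifying that $\{\tilde a_t\}$ is genuinely a holomorphic family of type~(B): that each $\tilde a_t$ is closed and sectorial with form domain $H^1(M)$, that the closedness bounds are uniform in $t$, and that $t\mapsto\tilde a_t(u,v)$ is holomorphic for fixed $u,v\in H^1(M)$. All three facts follow from the smooth dependence of $T_g$ and $\eta$ on $g$ together with the uniform ellipticity estimates from Section~\ref{prem}, after which the three steps above are standard.
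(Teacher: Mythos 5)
Your proof is correct but takes a genuinely different route from the paper. The paper proves Proposition~\ref{Neumann-tcaln} by a Liapunov--Schmidt reduction: it splits $L^2(M,\dg)$ along the $m$-dimensional eigenspace of $\lambda_0$, applies the implicit function theorem to eliminate the infinite-dimensional component $\psi=S(t,\lambda)\phi$, reduces the eigenvalue problem to the finite-dimensional determinant equation $\det A(t,\lambda)=0$, and finally invokes Rouch\'e's theorem to count roots. Your argument instead passes to sesquilinear forms on the $t$-independent domain $H^1(M)$ (thereby circumventing the genuine obstacle that the Neumann operator domain $\{\phi\in H^2: T_t(\nabla\phi,\nu_t)=0\}$ moves with $t$), conjugates by $P_t$ to land on a fixed Hilbert space, identifies $\{\tilde a_t\}$ as a Kato type~(B) holomorphic family, and then uses local constancy of the rank of the Riesz projection. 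Both routes are standard and valid; yours is shorter and more operator-theoretic. What the paper's heavier construction buys is reuse: the implicit-function data $S(t,\lambda)$ and the symmetric matrix $A(t,\lambda)$ are precisely what the subsequent proof of Proposition~\ref{Neumann-thmExistence} feeds into Puiseux's theorem and Kato's selection theorem to produce \emph{analytic} branches of eigenvalues and eigenfunctions. If you adopted your route, you would still need a separate argument (e.g.\ Kato's Theorem~3.9 for type~(B) families in the self-adjoint analytic case) to obtain the analytic selection needed later. One small point to tighten: your contour $\Gamma$ of radius $\epsilon'<\epsilon$ shows exactly $m$ eigenvalues lie in $(\lambda_0-\epsilon',\lambda_0+\epsilon')$; to conclude there are exactly $m$ in the full interval $(\lambda_0-\epsilon,\lambda_0+\epsilon)$ you should also note (as is implicit in the statement) that $\epsilon$ is small enough that $\lambda_0$ is the only eigenvalue of $-\mathscr{L}_{g_0}$ in $[\lambda_0-\epsilon,\lambda_0+\epsilon]$, and that by upper semicontinuity of the spectrum the closed annulus $\epsilon'\le|\zeta-\lambda_0|\le\epsilon$ stays in the resolvent set of $-\tilde{\mathscr{L}}_t$ for $|t|<\delta$.
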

\begin{proof}
Let $\{\phi_j\}_{j=1}^m$ be an orthonormal basis associated with $\lambda_0$, and consider the orthonormal projection on the corresponding eigenspace
\[
Pu=\sum_{j=1}^m\phi_j\int_M\phi_j u \dg.
\]
Note that $P$ induces the splitting $L^2(M,\dg)={\mathcal{R}}(P)\oplus\mathcal{N}(P)$
so that any function $u\in L^2(M,\dg)$ can be written as $u=\phi+\psi$, where  $\phi\in\mathcal{R}(P)=ker(\mathscr{L}+\lambda_0)$ and $\psi\in \mathcal{N}(P)$. With this in mind, we consider the family of metrics $g(t)$ on $M$ and the family of eigenvalue problems, where the operator $\mathscr{L}$ remains fixed and the families $T_t$ and $\nu_t$ change with the parameter $t$ along the boundary,
\begin{equation}\label{Neumann-deqlr}
\left\{\begin{array}{ccccc}
(I-P)(\mathscr{L} + \lambda)(\phi+\psi)    &=& 0& \text{in}&  M\\
P(\mathscr{L}+ \lambda)(\phi+\psi)         &=& 0& \text{in}&  M\\
T_t (\nu_t,\nabla(\phi+\psi)) &=& 0& \text{on}& \partial M.
\end{array}\right.
\end{equation}

To solve problem \eqref{Neumann-deqlr}, we first observe that since $\phi_j$ and $\psi$ are orthonormal, by the divergence theorem we must have
\begin{eqnarray*}
P(\mathscr{L} + \lambda)\psi  &=&  \sum_{j=1}^m\phi_j\int_{M}\phi_j(\mathscr{L} + \lambda)\psi\dg\\
&=& \sum_{j=1}^p\phi_j \int_{M}\phi_j(\mathscr{L}+\lambda)\psi-\psi(\mathscr{L}+\lambda)\phi_j\dg   \\
&=& \sum_{j=1}^p\phi_j \int_{\partial M}\phi_jT(\nabla\psi,\nu) -\psi T(\nabla\phi,\nu)\db \\
&=& \sum_{j=1}^m\phi_j\int_{\partial M}\phi_j T(\nabla\psi,\nu) \db
\end{eqnarray*}
which implies
\begin{equation*}
(\mathscr{L}+\lambda)\psi  =  (I-P)\big((\mathscr{L}+\lambda)\psi\big)+\displaystyle\sum_{j=1}^m\phi_j\int_{\partial M}\phi_j T(\nabla\psi,\nu) \db.
\end{equation*}
Thus, we get
\begin{equation*}
(\mathscr{L}+ \lambda)\psi+ (I-P)(\mathscr{L}_t - \mathscr{L})(\phi+\psi)-\sum_{j=1}^m\phi_j\int_{\partial M}\phi_jT(\nabla\psi,\nu)\db =0.
\end{equation*}
Moreover, the part concerning the boundary in \eqref{Neumann-deqlr} can be rewritten as
\begin{equation*}
T(\nu,\nabla \psi) +T_t(\nu_t,\nabla(\phi+\psi))-T(\nu,\nabla(\phi+\psi))=0.
\end{equation*}
Hence, to solve the first and the third equations of \eqref{Neumann-deqlr}, is equivalent to finding the zeros of the map
\begin{eqnarray*}
F :  \mathbb{R}\times\mathbb{R}\times\mathcal{R}(P)\times H^2(M)\cap\mathcal{N}(P)&\longrightarrow &\mathcal{N}(P)\times H^{\frac{3}{2}}(M)\\
(t,\lambda,\phi,\psi)&\mapsto&\big(F_1(t,\lambda,\phi,\psi),F_2(t,\lambda,\phi,\psi)\big),
\end{eqnarray*}
where
\begin{equation*}
\left\{\begin{array}{lcc}
F_1=(\mathscr{L} + \lambda)\psi+ (I-P)(\mathscr{L}_t - \mathscr{L})(\phi+\psi)-\displaystyle\sum_{j=1}^m\phi_j\int_{\partial M}\phi_jT(\nabla\psi,\nu)\db\\
F_2 = T(\nu,\nabla \psi) +T_t(\nu_t,\nabla(\phi+\psi))-T(\nu,\nabla(\phi+\psi)).
\end{array}\right.
\end{equation*}
Note that $F$ depends differentially on the variables $\lambda$, $t$, $\psi$ e $\phi$. Our intention is to use the implicit function theorem to show that
$F(t,\lambda,\phi,\psi)=(0,0)$ admits a solution $\psi$ as function of $\lambda$, $t$ and $\phi$. To this end, we observe that if $t = 0, \lambda=
\lambda_0$ and $\psi=0$, then
\begin{equation}\label{Neumann-TAI}
\frac{\partial F}{\partial \psi}(0,\lambda_0 , 0 , 0 )\dot{\psi}= \Big((\mathscr{L} + \lambda_0) \dot{\psi} - \sum_{j=1}^m \phi_j \int_{\partial M}\phi_j
T(\nabla\dot{\psi},\nu)\db \, ,\, \frac{\partial \dot{\psi} }{\partial \nu}\Big).
\end{equation}
We claim now that the map given in \eqref{Neumann-TAI} is an isomorphism from $H^2(M)\cap\mathcal{N}(P)$ onto $\mathcal{N}(P)\times H^{\frac{3}{2}}(M)$.
Indeed, the proof of this fact can be found in \cite{lions}.

Hence, by the implicit function theorem there exist positive numbers $\delta$, $\epsilon$ and a function $S(t , \lambda)\phi$ of class $\mathcal{C}^1$ at the variables
$(t , \lambda)$ such that for every $|t| < \delta$ and  $\lambda \in (\lambda_0-\epsilon, \lambda_0+\epsilon)$, $F(t ,\lambda,\phi, S(t,\lambda)\phi)=
(0,0)$. Furthermore, $S(t , \lambda)\phi$ is analytic at $\lambda$ and linear at $\phi$. This solves the equation \eqref{Neumann-deqlr} in relation to $\psi$.

We now observe that for every  $\phi\in\mathcal{R}(P)$ there exist real numbers $c_1,c_2,\dots,c_m$ so that $\phi = \sum_{j=1}^{m}c_j\phi_j$. Thus, the second equation in \eqref{Neumann-deqlr} can be equivalently perceived as a system of equations in the variables  $c_1,\dots,c_m$ as below
\begin{equation*}
\sum_{j=1}^m c_j\int_{M}\phi_k (\mathscr{L}_t + \lambda)(\phi_j+S(t,\lambda)\phi_j)\dg=0,\quad k=1,2,\dots, m.
\end{equation*}
In this way, $\lambda$ is an eigenvalue of $\mathscr{L}_t$ if and only if $\det A(t,\lambda)=0$, where $A(t,\lambda)$ is given by
\begin{equation*}
A_{kj}(t,\lambda)=\int_{M}\phi_k(\mathscr{L}_t + \lambda) (\phi_j+S(t,\lambda)\phi_j)\dg.
\end{equation*}
Furthermore, the associated eigenfunctions are given by
\begin{equation*}
u(t, \lambda)=\sum_{j=1}^{m}c_j(\phi_j+S(t,\lambda)\phi_j).
\end{equation*}
In other words, $c=(c_1,\ldots,c_m)$ must satisfy $A(t,\lambda) c=0$. It turns out that by Rouch\'e theorem, we have that: For every $\epsilon>0$
there is $\delta > 0$ so that if $|t - t_0| < \delta $, then there exist exactly $m$-roots of $\det A(t ,\lambda)=0$ in the interval
$(\lambda_0-\epsilon,\lambda_0+\epsilon)$.
\end{proof}

\begin{proof}[{\rm\textbf{Proof of Proposition \ref{Neumann-thmExistence}.}}]
Assume the same hypotheses as in Proposition~\ref{Neumann-tcaln}. We must show that there exist $m$ analytic curves of eigenvalues $\lambda_j(t)$ for $(\ref{Neumann-plrp})$ associated to $m$ analytic curves of eigenfunctions  $\phi_j(t)$. The strategy of proof is to reduce the problem to a finite-dimensional one and to apply Kato's Selection theorem, see \cite{kato}. With this in mind, we will make a slightly different construction than that of Proposition~\ref{Neumann-tcaln}. This will result, as we will see shortly, in obtaining a symmetric matrix.

Let $\{\phi_j\}_{j=1}^m$ be orthonormal eigenfunctions of the Laplace-Neumann operator associated with $\lambda_0$. For each $j=1,\ldots, m$ consider the following problem
\begin{equation}\label{Neumann-pfp}
\left\{\begin{array}{ccccc}
(\mathscr{L} +\lambda_0)u   &=0& \text{in}&  M\\
T_t(\nu_t,\nabla(\phi_j+ u)) &=0& \text{on}& \partial M\\
Pu=\displaystyle\sum_{j=1}^m\phi_j\int_{M}\phi_ju\dg &=0& \text{in}& M.
\end{array}\right.
\end{equation}

Consider now the orthogonal complement $[\phi_j]^{\bot}$ of $ker(\mathscr{L}+\lambda_0)$ in $L^2(M,\dm_0)$ and define
\begin{equation*}
F: (-\delta, \delta)\times H^2(M,\dm_0)\longrightarrow [\phi_j]^{\bot}\times\mathcal{R}(P)\times H^{\frac{3}{2}}(M,\dm_0)
\end{equation*}
by
\begin{equation*}
F(t,w)=\big((\mathscr{L}+\lambda_0)w,\, Pw,\, T_t(\nu_t,\nabla (\phi_j+w))\big).
\end{equation*}
Exactly as before we get that $\frac{\partial F}{\partial w}(0,0)$ is an isomorphism, so by the implicit function theorem there exist $\delta > 0$ and an analytic
function $w_j(t)$ defined on $|t - t_0| < \delta$ such that $F(t,w_j(t)) = 0$. In addition, we obtain for each $|t - t_0| < \delta$ a linearly independent set
of functions $\{\varphi_j(t)\}_{j=1}^m$, given by $\varphi_j(t)=\phi_j+w_j(t)$,
that satisfy the equation $(\ref{Neumann-pfp})$. By using the Gram-Schmidt orthonormalization process with respect to the inner product
\[
(u,v):=\int_{M}uv\, \dmt,
\]
we can without loss of generality assume  that $\{\varphi_j(t)\}_{j=1}^m$ is orthonormal. Note that the functions $\varphi_j(t)$ belong to $D_t=
\{u \in H^{2}(M,\dm_0), T_t(\nu_{t},\nabla u) = 0\}$. Moreover, since $\mathscr{L}_t$ is self-adjoint with respect to the inner product
defined above, it follows that the matrix $\int_{M}\varphi_j \mathscr{L}_t \varphi_k \dmt$ is symmetric.

For a given $\mathcal{T}\in \mathcal{S}^{2,k}$, we define a family of Riemannian metrics on $M$ by $g(t) = g_0 +t\mathcal{T}$ and let $P(t)$ be given by
\begin{equation*}
P(t)u=\sum_{j=1}^m\varphi_j(t)\int_{M}u\varphi_j(t)\dmt.
\end{equation*}
We finally define for each $j=1,\dots, m$,
\begin{eqnarray*}
 G_j :  (-\epsilon, \epsilon)\times\mathbb{R}\times H^2(M)&\longrightarrow & [\phi_j]^{\bot}\times H^{\frac{3}{2}}(M,\dm_0)\times\mathcal{R}(P)\\
 (t,\lambda,w)&\mapsto & \big(G_{j1}(t,\lambda,w),G_{j2}(t,\lambda,w),G_{j3}(t,\lambda,w)\big)
\end{eqnarray*}
by
\begin{equation*}
\left\{\begin{array}{lcc}
G_{j1}=(I-P(t))((\mathscr{L}_t + \lambda))(w+\varphi_j(t))\\
G_{j2}=T_t(\nu_t,\nabla w);\\
G_{j3}= P(t)w.\end{array}\right.
\end{equation*}
Once again, the implicit function theorem yields a number   $\delta>0$ and functions $w_j(t,\lambda)$ such that for any $|t- t_0|<\delta$ and every
$|\lambda-\lambda_0|<\delta$, the equality $G_j(t,\lambda,w_j(t,\lambda))=(0,0,0)$ holds. As we know,  $\lambda$  is an eigenvalue for \eqref{Neumann-plrp}
if and only if there exists a nonzero $m$-uple $c=(c_1,\ldots,c_m)$ of real numbers such that $A(t,\lambda)c=0$, where
\begin{equation*}
A_{ij}(t,\lambda)= \int_{M}\varphi_i(t)(\mathscr{L}_t+\lambda)(\varphi_j(t)+w_j(t,\lambda))\dmt.
\end{equation*}
That is, $\lambda$ is an eigenvalue of \eqref{Neumann-plrp} if $\det A(t,\lambda)=0$. By Rouch\'e's theorem, there exist $m$ roots near $\lambda_0$
counting multiplicity, for each $t$. So by Puiseux's theorem \cite{wall} there exist $m$-analytic functions $t\to \lambda_i(t)$ which locally solve the
equation $\det A(t,\lambda)=0$. It can be easily seen that  $A$ is symmetric and hence, by Kato's Selection theorem~\cite{kato}, we can find an analytic
curve $c^i(t)\in\mathbb{R}^m$ such that $A(t,\lambda_i(t))c^i(t)=0$, for each $i=1,\dots,m$. Thus $$\psi_i(t)=\sum_{j=1}^m c^i_j(t)(\varphi_j
+\omega_j(t,\lambda_i(t)))$$ is an analytic curve of eigenfunctions for \eqref{Neumann-plrp} associated with $\lambda_i(t)$. Reasoning exactly as Kato in
\cite[p.~98]{kato} we can obtain $m$-analytic curves of eigenvalues $\{\phi_i(t)\}_{i=1}^m$ such that $\int_{M}\phi_i(t)\phi_j(t)\dmt=\delta_i^j$.
\end{proof}

\begin{remark}
In the case of $m=m(\lambda_0)=1$, the existence of a differentiable curve of eigenvalues through $\lambda_0$ follows directly from the implicit function
theorem applied to the map $F:S^k\times H^2(M,\dm_0)\times \mathbb{R}\rightarrow L^2(M, \dm_0)\times \mathbb{R}$ defined by
\begin{equation*}
F(g, u , \lambda)= \Big((\mathscr{L}_g + \lambda)u,\int_{M}u^2\dg\Big).
\end{equation*}
The corresponding formula to the derivative $\lambda'(t)$ can be obtained by letting $i=j=1$ in Proposition \ref{Neumann-pro1}.
\end{remark}

\begin{remark}
We observe that Theorem~\ref{thm1} remains valid if we replace the Dirichlet boundary condition by the $T_g$--Neumann boundary condition.
\end{remark}

\section{Domain variation}\label{DV}
In this section, we consider the case of domain deformation. For this, let $(M,g)$ be a Riemannian manifold, $\Omega\subset M$ be a bounded domain with smooth boundary $\partial\Omega$,  and $T$ be a symmetric $(0,2)$--tensor on $M$. We consider a deformation $\Omega_t$ by the family of diffeomorphisms $f_t:\Omega\to\Omega_t$, where $f_0=id_\Omega$ is the identity map. The smooth variations of $g$ and $T$ are given by
\begin{equation*}
g(t)=f_{t}^{\ast}g\quad  \mbox{and} \quad T_t=f_{t}^{\ast}T.
\end{equation*}
We denote
\begin{equation*}
V=\frac{d}{dt}\big|_{t=0}f_t,\quad H=\frac{d}{dt}\big|_{t=0}f_t^\ast g=\mathcal{L}_Vg \quad \mbox{and} \quad
T'=\frac{d}{dt}\big|_{t=0}f_t^\ast T=\mathcal{L}_VT.
\end{equation*}

The next two technical lemmas are necessary tools to obtain the Hadamard-type formulas given by Proposition~\ref{Hadamard for domains}. 
\begin{lemma}
For $X,Y\in\mathfrak{X}(M)$, we have
\begin{equation}\label{eq.scrH.XY}
\mathscr{H}_T(X,Y)=-\langle \nabla_{TX}V,Y\rangle-\langle X,\nabla_{TY}V\rangle+\langle(\nabla_VT)X,Y \rangle.
\end{equation}
\end{lemma}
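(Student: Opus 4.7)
The plan is to unpack $\mathscr{H}_T$ using its definition from Section~\ref{var}, namely $\mathscr{H}_T = -(TH+HT)+T'$, and then exploit the fact that in the domain-variation framework both $H$ and $T'$ are Lie derivatives along $V$, which can be rewritten via the Levi-Civita connection. The anticipated mechanism is that two of the terms coming from $-(TH+HT)$ will cancel precisely against two terms in $T'$, leaving the desired three-term expression.

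First I would write out $(TH+HT)(X,Y)=H(TX,Y)+H(X,TY)$, using the convention $ST(X,Y)=S(TX,Y)$ together with the symmetry of $T$. Since $H=\mathcal{L}_Vg$, one has
\begin{equation*}
H(Z,W)=\langle\nabla_Z V,W\rangle+\langle Z,\nabla_W V\rangle,
\end{equation*}
so substituting $(Z,W)=(TX,Y)$ and $(Z,W)=(X,TY)$ produces four scalar terms, two of which are the ``clean'' ones $\langle\nabla_{TX}V,Y\rangle$ and $\langle X,\nabla_{TY}V\rangle$, and two of which are the ``mixed'' ones $\langle TX,\nabla_Y V\rangle$ and $\langle\nabla_X V,TY\rangle$.

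Next I would handle $T'=\mathcal{L}_V T$ via the standard identity for the Lie derivative of a $(0,2)$-tensor,
\begin{equation*}
(\mathcal{L}_V T)(X,Y)=V\bigl(T(X,Y)\bigr)-T([V,X],Y)-T(X,[V,Y]).
\end{equation*}
Expanding $V(T(X,Y))=(\nabla_V T)(X,Y)+T(\nabla_V X,Y)+T(X,\nabla_V Y)$ and using the torsion-free identity $[V,Z]=\nabla_V Z-\nabla_Z V$, the $\nabla_V X$ and $\nabla_V Y$ contributions cancel, yielding
\begin{equation*}
T'(X,Y)=(\nabla_V T)(X,Y)+T(\nabla_X V,Y)+T(X,\nabla_Y V).
\end{equation*}
Using symmetry of $T$ once more, the last two summands become exactly $\langle\nabla_X V,TY\rangle$ and $\langle TX,\nabla_Y V\rangle$, i.e.\ the mixed terms from the previous step but with the opposite sign.

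Finally I would add $-(TH+HT)(X,Y)$ and $T'(X,Y)$: the four mixed terms kill each other in pairs, leaving
\begin{equation*}
\mathscr{H}_T(X,Y)=-\langle\nabla_{TX}V,Y\rangle-\langle X,\nabla_{TY}V\rangle+(\nabla_V T)(X,Y),
\end{equation*}
which is \eqref{eq.scrH.XY} after rewriting $(\nabla_V T)(X,Y)=\langle(\nabla_V T)X,Y\rangle$. The only delicate point is bookkeeping: one must keep straight which argument of $T$ carries the covariant derivative, and use the symmetry of $T$ at the right moments to line up the mixed terms so the cancellation is exact. No global or boundary considerations enter; this is a purely pointwise tensorial identity.
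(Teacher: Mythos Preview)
Your proof is correct and follows essentially the same route as the paper's: both compute $T'=\mathcal{L}_V T$ and $H=\mathcal{L}_V g$ via the Levi-Civita connection, expand $HT(X,Y)=H(TX,Y)$ and $TH(X,Y)=H(X,TY)$, and observe that the mixed terms $\langle TX,\nabla_Y V\rangle$ and $\langle \nabla_X V,TY\rangle$ cancel against the corresponding pieces of $T'$. The only cosmetic difference is the order in which the ingredients are assembled.
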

\begin{proof}
\begin{eqnarray}
T'(X,Y)&=& (\mathcal{L}_VT)(X,Y)\nonumber\\
       &=& V\langle TX,Y\rangle-T(\nabla_VX-\nabla_XV,Y)-T(X,\nabla_VY-\nabla_YV)\nonumber\\
       &=& T(X,\nabla_YV)+T(\nabla_XV,Y)+\langle \nabla_V(TX),Y\rangle-T(\nabla_VX,Y)\nonumber\\
       &=& T(X,\nabla_YV)+T(\nabla_XV,Y)+\langle (\nabla_V T)X,Y\rangle.\nonumber
\end{eqnarray}
In particular, for $T=g$, we get
\begin{equation*}
H(X,Y)=\langle\nabla_XV,Y \rangle+\langle X,\nabla_YV \rangle.
\end{equation*}
Follows that
\begin{equation*}
HT(X,Y)=H(TX,Y)=\langle\nabla_{TX}V,Y \rangle+\langle TX,\nabla_YV \rangle
\end{equation*}
and
\begin{equation*}
TH(X,Y)=H(X,TY)=\langle\nabla_XV,TY \rangle+\langle X,\nabla_{TY}V \rangle.
\end{equation*}
By using the identity $\mathscr{H}_T=T'-(HT+TH)$ we conclude our proof.
\end{proof}

\begin{lemma}\label{lem.domH} For any eigenfunctions associated to the eigenvalue $\lambda$ we have
\begin{equation}
  \mathscr{H}_T(\nabla\phi_i,\nabla\phi_j)=-\dv_{\eta}(\langle V,\nabla\phi_{j}\rangle T\nabla\phi_{i}+\langle V,\nabla\phi_{i}\rangle T\nabla\phi_{j})+\frac{1}{2}\langle V,\nabla \mathscr{L}(\phi_i\phi_j)\rangle.  
\end{equation}
\end{lemma}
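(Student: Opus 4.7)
The plan is to expand both sides in terms of covariant derivatives of $V$, $\nabla\phi_i$, $\nabla\phi_j$, and $T$, then match. I will start from the formula \eqref{eq.scrH.XY} of the preceding lemma applied at $X=\nabla\phi_i$, $Y=\nabla\phi_j$, giving
\begin{equation*}
\mathscr{H}_T(\nabla\phi_i,\nabla\phi_j)=-\langle \nabla_{T\nabla\phi_i}V,\nabla\phi_j\rangle-\langle \nabla\phi_i,\nabla_{T\nabla\phi_j}V\rangle+\langle(\nabla_VT)\nabla\phi_i,\nabla\phi_j \rangle.
\end{equation*}

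Next I would expand the right-hand side. Using $\dv_\eta(fX)=f\dv_\eta(X)+\langle\nabla f,X\rangle$ together with the eigenvalue equation $\dv_\eta(T\nabla\phi_i)=\mathscr{L}\phi_i=-\lambda\phi_i$, one gets
\begin{equation*}
\dv_\eta\bigl(\langle V,\nabla\phi_j\rangle T\nabla\phi_i\bigr)=-\lambda\phi_i\langle V,\nabla\phi_j\rangle+\langle\nabla\langle V,\nabla\phi_j\rangle,T\nabla\phi_i\rangle,
\end{equation*}
and symmetrically in $i\leftrightarrow j$. On the other hand, the product rule for $\mathscr{L}$ stated after \eqref{Formula-IP} gives $\mathscr{L}(\phi_i\phi_j)=-2\lambda\phi_i\phi_j+2T(\nabla\phi_i,\nabla\phi_j)$, so
\begin{equation*}
\tfrac{1}{2}\langle V,\nabla\mathscr{L}(\phi_i\phi_j)\rangle=-\lambda\phi_i\langle V,\nabla\phi_j\rangle-\lambda\phi_j\langle V,\nabla\phi_i\rangle+V\bigl(T(\nabla\phi_i,\nabla\phi_j)\bigr).
\end{equation*}
Adding the two contributions on the right-hand side of the claim, the eigenvalue terms cancel, leaving
\begin{equation*}
-\langle\nabla\langle V,\nabla\phi_j\rangle,T\nabla\phi_i\rangle-\langle\nabla\langle V,\nabla\phi_i\rangle,T\nabla\phi_j\rangle+V\bigl(T(\nabla\phi_i,\nabla\phi_j)\bigr).
\end{equation*}

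Now I would unpack these remaining pieces using the metric compatibility of $\nabla$. Writing $W\langle V,\nabla\phi_j\rangle=\langle\nabla_WV,\nabla\phi_j\rangle+\langle V,\nabla_W\nabla\phi_j\rangle$ with $W=T\nabla\phi_i$ (and analogously for $i\leftrightarrow j$), and expanding
\begin{equation*}
V\bigl(T(\nabla\phi_i,\nabla\phi_j)\bigr)=\langle(\nabla_VT)\nabla\phi_i,\nabla\phi_j\rangle+\langle\nabla_V\nabla\phi_i,T\nabla\phi_j\rangle+\langle T\nabla\phi_i,\nabla_V\nabla\phi_j\rangle,
\end{equation*}
the two terms of the form $-\langle\nabla_{T\nabla\phi_i}V,\nabla\phi_j\rangle$ and $-\langle\nabla\phi_i,\nabla_{T\nabla\phi_j}V\rangle$ match exactly the first two terms of \eqref{eq.scrH.XY}, while $\langle(\nabla_VT)\nabla\phi_i,\nabla\phi_j\rangle$ matches the third.

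The final step, which is really the only non-bookkeeping point, is the cancellation of the Hessian terms. One needs
\begin{equation*}
\langle V,\nabla_{T\nabla\phi_i}\nabla\phi_j\rangle=\langle T\nabla\phi_i,\nabla_V\nabla\phi_j\rangle,
\end{equation*}
and similarly with $i$ and $j$ exchanged. This is precisely the symmetry of the Hessian $\nabla^2\phi_j$ viewed as a symmetric $(0,2)$--tensor, evaluated on $(V,T\nabla\phi_i)$. Once these are invoked, the remaining four Hessian-type terms cancel pairwise, completing the proof. I expect no serious obstacle; the only place to be careful is distinguishing $T$ as a $(1,1)$--tensor (acting on vectors) versus a $(0,2)$--tensor, and making sure the Hessian symmetry is applied with the correct slots.
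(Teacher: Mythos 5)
Your proof is correct and is essentially the same computation as the paper's: both start from \eqref{eq.scrH.XY}, use the Leibniz rule for $\dv_\eta$ together with the eigenvalue equation, the product rule $\mathscr{L}(\phi_i\phi_j)=\phi_i\mathscr{L}\phi_j+\phi_j\mathscr{L}\phi_i+2T(\nabla\phi_i,\nabla\phi_j)$, and the symmetry of the Hessian to make the second-derivative terms cancel. The only difference is organizational (you expand the right-hand side and simplify back to $\mathscr{H}_T$, whereas the paper pushes $\mathscr{H}_T$ forward), which is immaterial.
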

\begin{proof}
Equation~\eqref{eq.scrH.XY} implies
\begin{align*}
\mathscr{H}_T(\nabla\phi_i,\nabla\phi_j)=&-\langle\nabla_{T\nabla\phi_i}V,\nabla\phi_j\rangle
-\langle\nabla_{T\nabla\phi_j}V,\nabla\phi_i\rangle+\langle(\nabla_VT)\nabla\phi_i,\nabla\phi_j\rangle.
\end{align*}
Now, we observe that 
\begin{align*}
    \langle\nabla_{T\nabla\phi_{i}}V, \nabla\phi_{j}\rangle = \dv_{\eta}(\langle V,\nabla\phi_{j}\rangle T\nabla\phi_{i})+\lambda\langle V,\nabla\phi_{j}
\rangle\phi_{i}-\nabla^{2}\phi_{j}(V,T\nabla \phi_{i}).
\end{align*}
Moreover,
\begin{align*}
  \nabla^{2}\phi_{j}(V,T\nabla\phi_{i})+ \nabla^2\phi_i(V,T\nabla\phi_j)+\langle(\nabla_VT)\nabla\phi_i,\nabla\phi_j\rangle &=\langle\nabla (T(\nabla\phi_{i},\nabla\phi_{j})),V\rangle.
\end{align*}
Thus,
\begin{align*}
   \mathscr{H}_T(\nabla\phi_i,\nabla\phi_j)=& -\dv_{\eta}(\langle V,\nabla\phi_{j}\rangle T\nabla\phi_{i}+\langle V,\nabla\phi_{i}\rangle T\nabla\phi_{j})-\lambda\langle V,\nabla(\phi_{j}\phi_{i})\rangle+\\
   &+\langle\nabla (T(\nabla\phi_{i},\nabla\phi_{j})),V\rangle\\
   &=-\dv_{\eta}(\langle V,\nabla\phi_{j}\rangle T\nabla\phi_{i}+\langle V,\nabla\phi_{i}\rangle T\nabla\phi_{j})+\frac{1}{2}\langle V,\nabla \mathscr{L}(\phi_i\phi_j)\rangle.
\end{align*}
It finishes the proof.
\end{proof}

The next proposition provides Hadamard-type formulas for the eigenvalues of the operator $\mathscr{L}$ considering both Dirichlet and Neumann boundary conditions.  
\begin{proposition}\label{pro domain}
Let $(M,g)$ be a Riemannian manifold, $\Omega\subset M$ be a bounded domain, $f_{t}:\Omega\rightarrow (M,g)$ be an analytic family of diffeomorphisms
$(\Omega_{t}=f_{t}(\Omega))$ and $\lambda$ be an eigenvalue of $\mathscr{L}_g$ with multiplicity $m>1$. Then there exist a family of $m$ functions $\{\phi_i(t)\}
\in C^{\infty}(\Omega_{t})$ with $\langle\phi_i(t),\phi_j(t)\rangle_{L^2(\Omega_t,\dm)}=\delta_{ij}$ and a differentiable family of real numbers $\lambda_i(t)$ with
$\lambda_i(0)=\lambda$, such that they satisfy
\begin{equation}\label{eq var dom1}
\left\{
\begin{array}{ccccc}
-\mathscr{L}_{t}\phi_i(t)&=& \lambda(t)\phi_i(t) &\text{in}& \Omega\\
\mathscr{B}_\alpha(\phi_i(t))&=&0 &\hbox{on}& \partial \Omega,
\end{array}
\right.
\end{equation}
for all $t$ and $i=1,\ldots,m$, where $\mathscr{B}_{\alpha}(\phi_i)=\alpha\langle T\nabla\phi_i,\nu_t\rangle+(1-\alpha)\phi_i$ for $\alpha\in\{0,1\}$.
Moreover, we obtain the following variation formula
\begin{align}\label{eq-prop3}
(\lambda_i+\lambda_j)'\Big|_{t=0}\!\!\delta_{ij}\!=&\int_{\partial\Omega}\!\Big[\mathscr{L}(\phi_i\phi_j)\langle  V, \nu\rangle-2\langle V,\nabla\phi_{j}\rangle \langle \nabla\phi_{i},T\nu\rangle-2\langle V,\nabla\phi_{i}\rangle \langle \nabla\phi_{j},T\nu\rangle\nonumber\\
&+\langle V,\nabla\eta\rangle \langle \nabla(\phi_i\phi_j),T\nu\rangle\Big]\dn
\end{align}
where $V=\frac{d}{dt}\big|_{t=0}f_{t}$.
\end{proposition}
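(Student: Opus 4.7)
The plan is to reduce the domain variation to a metric variation on the fixed reference domain $\Omega$ by pulling back all data via $f_t$. Setting $g(t)=f_t^{\ast}g$, $T_t=f_t^{\ast}T$, and $\eta(t)=f_t^{\ast}\eta=\eta\circ f_t$, the eigenvalue problem \eqref{eq var dom1} on $\Omega_t$ for the original operator corresponds, via $\phi_i(t)\mapsto \phi_i(t)\circ f_t$, to an eigenvalue problem on the \emph{fixed} domain $\Omega$ for the pulled-back operator $\bar{\mathscr{L}}_t$ defined in \eqref{def.barL}. The existence of analytic curves $\lambda_i(t)$ and $\phi_i(t)$ with the required orthonormality is then immediate: Lemma~\ref{LemExist} covers the Dirichlet case ($\alpha=0$) and Proposition~\ref{Neumann-thmExistence} covers the $T_g$--Neumann case ($\alpha=1$). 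Under this pullback one computes the standard identifications $H=\mathcal{L}_Vg$, hence $h=\mathrm{tr}_g H=2\,\dv V$, and $\dot\eta=V(\eta)=\langle V,\nabla\eta\rangle$.

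Next, I would invoke the master identity \eqref{eq.LB} (whose Neumann analogue is \eqref{eq.LB-TgN}, syntactically the same) to write
\begin{equation*}
(\lambda_i+\lambda_j)'\delta_{ij} =\int_{\Omega}\!\Big(\tfrac{h}{2}\mathscr{L}(\phi_i\phi_j) +2\mathscr{H}_{T}(\nabla\phi_i,\nabla\phi_j) +T(\nabla\dot\eta,\nabla(\phi_i\phi_j))\Big)\dm.
\end{equation*}
Lemma~\ref{lem.domH} rewrites the middle term as
\begin{equation*}
2\mathscr{H}_T(\nabla\phi_i,\nabla\phi_j)=-2\,\dv_{\eta}\!\bigl(\langle V,\nabla\phi_j\rangle T\nabla\phi_i+\langle V,\nabla\phi_i\rangle T\nabla\phi_j\bigr)+\langle V,\nabla\mathscr{L}(\phi_i\phi_j)\rangle,
\end{equation*}
so the weighted divergence theorem immediately turns this divergence piece into the two boundary terms $-2\langle V,\nabla\phi_j\rangle\langle \nabla\phi_i,T\nu\rangle$ and $-2\langle V,\nabla\phi_i\rangle\langle \nabla\phi_j,T\nu\rangle$ in \eqref{eq-prop3}, after using $\langle T\nabla\phi_i,\nu\rangle=\langle\nabla\phi_i,T\nu\rangle$.

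The remaining step combines the two scalar-flow terms. Writing $u=\mathscr{L}(\phi_i\phi_j)$ and using $h/2=\dv V$ together with the identity $\dv(uVe^{-\eta})=e^{-\eta}\bigl[(\dv V)u+V(u)-uV(\eta)\bigr]$, the ordinary divergence theorem yields
\begin{equation*}
\int_{\Omega}\!\bigl[(\dv V)u+V(u)\bigr]\dm=\int_{\partial\Omega}u\langle V,\nu\rangle\dn+\int_{\Omega}u\,V(\eta)\dm.
\end{equation*}
In parallel, integration by parts on the $\dot\eta$ term gives $\int_{\Omega}T(\nabla\dot\eta,\nabla(\phi_i\phi_j))\dm=-\int_{\Omega}\dot\eta\,\mathscr{L}(\phi_i\phi_j)\dm+\int_{\partial\Omega}\dot\eta\,\langle T\nabla(\phi_i\phi_j),\nu\rangle\dn$. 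Since $\dot\eta=V(\eta)$, the two interior occurrences of $\int_{\Omega}V(\eta)\,\mathscr{L}(\phi_i\phi_j)\dm$ cancel exactly, leaving only the boundary integrals in \eqref{eq-prop3}.

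The main subtlety I expect is bookkeeping of the $\eta$-dependent terms: because $\eta$ is pulled back along $f_t$, the quantity $\dot\eta$ is not free, and it is exactly this forced dependency $\dot\eta=V(\eta)$ which produces the cancellation of the interior $\mathscr{L}(\phi_i\phi_j)V(\eta)$ terms and the appearance of the last boundary summand $\langle V,\nabla\eta\rangle\langle\nabla(\phi_i\phi_j),T\nu\rangle$. This is also why the formulation \eqref{def.barL} was set up with $\eta$ allowed to vary. Everything else is a clean application of the weighted divergence theorem, and the result holds uniformly in $\alpha\in\{0,1\}$ because Lemma~\ref{lem.domH} and the two master identities \eqref{eq.LB}, \eqref{eq.LB-TgN} take the same form in both boundary settings.
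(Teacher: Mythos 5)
Your proposal is correct and takes essentially the same route as the paper: the same pullback reduction to $\bar{\mathscr{L}}_t$ on the fixed domain with $g(t)=f_t^\ast g$, $T_t=f_t^\ast T$, $\eta(t)=\eta\circ f_t$, the same invocation of \eqref{eq.LB}/\eqref{eq.LB-TgN} with $h=2\dv V$ and $\dot\eta=\langle V,\nabla\eta\rangle$, and the same use of Lemma~\ref{lem.domH}. The only (cosmetic) difference is bookkeeping at the end: you integrate the scalar pieces by parts in two separate steps and observe a cancellation of the interior $\int_\Omega V(\eta)\,\mathscr{L}(\phi_i\phi_j)\dm$ terms, whereas the paper first rewrites $T(\nabla\dot\eta,\nabla(\phi_i\phi_j))=\dv_\eta(\dot\eta\,T\nabla(\phi_i\phi_j))-\langle\mathscr{L}(\phi_i\phi_j)V,\nabla\eta\rangle$ so that all scalar pieces assemble into a single $\dv_\eta(\mathscr{L}(\phi_i\phi_j)V-2\cdots+\dot\eta\,T\nabla(\phi_i\phi_j))$ and then applies the weighted divergence theorem once.
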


\begin{remark}
We point out that if $T=id$ and $\eta$ is constant, then formula~\eqref{eq-prop3} has been obtained by Soufi and Ilias~\cite{soufi}, moreover, Henry~\cite{henry} also gave a similar formula for the Laplacian on domains in $\mathbb{R}^n$.
\end{remark}

\begin{proof}
Consider the family of metrics $g(t)=f_{t}^*g$ and symmetric $(0,2)$--tensors $T_t=f_t^\ast T$ on $\Omega$. It is not difficult to see that Lemma \ref{LemExist} is still valid for the operator $\bar{\mathscr{L}}_t$ with $\eta(t)=\eta\circ f_t$, that is, there exists a family $\{\bar\phi_i(t)\}\subset C^{\infty}(\Omega)$ of analytic functions  in $t$ satisfying $\langle\bar\phi_i(t),\bar\phi_j(t)\rangle_{L^2(\Omega,\dm_t)}=
\delta_{ij}$ and
\begin{equation}\label{eq.L.var.dom}
\begin{array}{ccccc}
    -\bar{\mathscr{L}}_t\bar\phi_i(t) &=& \lambda_i(t)\bar\phi_i(t) &  \mbox{in} &\Omega.
\end{array}
\end{equation}
We claim that $\lambda_i(t)$ and $\phi_i(t):=\bar\phi_i(t)\circ f_t^{-1}$ satisfy \eqref{eq var dom1}. Initially, we have
\begin{equation*}
g(df(T_tX),df e_i)=g_t(T_tX,e_i)=T_t(X,e_i)=T(df X,df e_i)=g(T df X,df e_i),
\end{equation*}
for all $X\in\mathfrak{X}(M)$. So $dfT_t\nabla_t\bar{\phi}=T\nabla\phi$ and
\begin{equation*}
\dv_gT\nabla\phi=\dv_gdfT_t\nabla_t\bar{\phi}=\dv_{g(t)}T_t\nabla_t\bar{\phi}.
\end{equation*}
Thus, for each $q=f(p)\in\Omega_t$ we obtain
\begin{align*}
\big(\mathscr{L}_g\phi_{i}(t)\big)(q)
&= \big[\dv_g \big(T\nabla\phi_{i}(t)\big) -T\big(\nabla\eta,\nabla\phi_{i}(t)\big)\big]_{f(p)}\\
&= \big[\dv_{g(t)}\big(T_t\nabla_{t}\bar{\phi_{i}}(t)\big)   -T_{t}\big(\nabla_t\eta(t),\nabla_t\bar{\phi_{i}}(t)\big)\big]_p\\
&= \big[\bar{\mathscr{L}}_t\bar{\phi_{i}}(t)\big]_p =-\lambda_{i}(t)\big(\bar{\phi_{i}}(t)\big)_p
=-\lambda_{i}(t)\big(\phi_{i}(t)\circ f\big)_{p} =  -\lambda_i(t)\phi_{i}(t)(q).
\end{align*}

Since $\bar\phi_{i}(0)=\phi_i(0)$, $\bar{\mathscr{L}}_0=\mathscr{L}$ and $h=\langle H,g\rangle=2\dv V$, by equations \eqref{eq.LB} and \eqref{eq.LB-TgN} we have
\begin{equation*}
s_{ij}\delta_{ij} = \int_{\Omega}\mathscr{L}(\phi_i\phi_j)\dv V + 2\mathscr{H}_T(\nabla\phi_i,\nabla\phi_j)
+T(\nabla\dot{\eta},\nabla(\phi_{i}\phi_{j}))\dm,
\end{equation*}
where $s_{ij}=(\lambda_i+\lambda_j)'$ and $\dot{\eta}=\frac{d}{dt}\big|_{t=0}(\eta\circ f_t)=\langle\nabla\eta,V\rangle
$.  By Lemma \ref{lem.domH} and the identity 
\begin{align*}
\dv_\eta(\dot{\eta}T(\nabla(\phi_i\phi_j)))&=\dot{\eta}\mathscr{L}(\phi_i\phi_j)+T(\nabla\dot{\eta},\nabla(\phi_i\phi_j))\\
&=\langle \mathscr{L}(\phi_i\phi_j) V,\nabla\eta\rangle+T(\nabla\dot{\eta},\nabla(\phi_i\phi_j)),\end{align*}
we obtain
\begin{align*}
s_{ij}\delta_{ij} = & \int_{\Omega}\Big[\mathscr{L}(\phi_i\phi_j)\dv V -2\dv_{\eta}(\langle V,\nabla\phi_{j}\rangle T\nabla\phi_{i}+\langle V,\nabla\phi_{i}\rangle T\nabla\phi_{j})\\
&+\langle V,\nabla \mathscr{L}(\phi_i\phi_j)\rangle+\dv_\eta(\langle V,\nabla\eta\rangle T\nabla(\phi_i\phi_j))- \langle \mathscr{L}(\phi_i\phi_j) V,\nabla\eta\rangle\Big]\dm\\
=&\int_{\Omega}\Big[\dv_{\eta}(\mathscr{L}(\phi_i\phi_j)V-2\langle V,\nabla\phi_{j}\rangle T\nabla\phi_{i}-2\langle V,\nabla\phi_{i}\rangle T\nabla\phi_{j}\\
&+\langle V,\nabla\eta\rangle T\nabla(\phi_i\phi_j)\Big]\dm\\
=&\int_{\partial\Omega}\Big[\mathscr{L}(\phi_i\phi_j)\langle  V, \nu\rangle-2\langle V,\nabla\phi_{j}\rangle \langle T\nabla\phi_{i},\nu\rangle-2\langle V,\nabla\phi_{i}\rangle \langle T\nabla\phi_{j},\nu\rangle\\
&+\langle V,\nabla\eta\rangle \langle T\nabla(\phi_i\phi_j),\nu\rangle\Big]\dn,
\end{align*}
which is~\eqref{eq-prop3}.
\end{proof}

Proposition~\ref{pro domain} under the boundary conditions becomes 
\begin{proposition}\label{Hadamard for domains}
Let $f_t,\lambda(t),\phi_i(t)$ be as in Proposition~\ref{pro domain}, and let us consider the eigenvalue boundary problem
\begin{equation}
\left\{
\begin{array}{ccccc}
-\mathscr{L}_{t}\phi_i(t)&=& \lambda(t)\phi_i(t) &\text{in}& \Omega\\
\mathscr{B}_\alpha(\phi_i(t))&=&0 &\hbox{on}& \partial \Omega.
\end{array}
\right.
\end{equation}
Then we have
\begin{align}
for\;\;\alpha=0\;\; case:\; (\lambda_i+\lambda_j)'\delta_{ij}&=-2\int_{\partial\Omega}\frac{\partial\phi_i}{\partial\nu}\frac{\partial\phi_j}{\partial\nu}T(\nu,\nu)\langle V,\nu\rangle\dn,\label{DP}\\ 
for\;\;\alpha=1\;\; case:\; (\lambda_i+\lambda_j)'\delta_{ij}&=2\int_{\partial\Omega}(T(\nabla\phi_i,\nabla\phi_j)-\lambda\phi_i\phi_j)\langle V,\nu\rangle\dn. \label{NP}
\end{align}
\end{proposition}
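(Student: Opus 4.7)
The plan is to specialize the general Hadamard-type formula \eqref{eq-prop3} of Proposition~\ref{pro domain} by substituting the appropriate boundary conditions. The proposition already supplies us with the existence of the analytic families $\lambda_i(t)$ and $\phi_i(t)$ together with the variational identity, so both formulas \eqref{DP} and \eqref{NP} should fall out from a careful term-by-term simplification of the boundary integral
\[
\int_{\partial\Omega}\!\Big[\mathscr{L}(\phi_i\phi_j)\langle  V, \nu\rangle-2\langle V,\nabla\phi_{j}\rangle \langle \nabla\phi_{i},T\nu\rangle-2\langle V,\nabla\phi_{i}\rangle \langle \nabla\phi_{j},T\nu\rangle+\langle V,\nabla\eta\rangle \langle \nabla(\phi_i\phi_j),T\nu\rangle\Big]\dn
\]
under the constraint $\mathscr{B}_\alpha(\phi_i)=0$. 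The key preparatory identity (stated just after \eqref{Formula-IP}) is the Leibniz rule
\[
\mathscr{L}(\phi_i\phi_j)=\phi_i\mathscr{L}\phi_j+\phi_j\mathscr{L}\phi_i+2T(\nabla\phi_i,\nabla\phi_j),
\]
which combined with $-\mathscr{L}\phi_k=\lambda\phi_k$ gives $\mathscr{L}(\phi_i\phi_j)=2T(\nabla\phi_i,\nabla\phi_j)-2\lambda\phi_i\phi_j$ throughout $\Omega$.

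For the Dirichlet case $\alpha=0$, I would use that $\phi_i=\phi_j=0$ on $\partial\Omega$ implies the tangential part of $\nabla\phi_i$ vanishes, so $\nabla\phi_i=\frac{\partial\phi_i}{\partial\nu}\nu$ on the boundary (and likewise for $\phi_j$). Consequently $\phi_i\phi_j$ and $\nabla(\phi_i\phi_j)=\phi_i\nabla\phi_j+\phi_j\nabla\phi_i$ both vanish on $\partial\Omega$, so the $\lambda\phi_i\phi_j$ and $\langle V,\nabla\eta\rangle$ contributions disappear. Plugging $\nabla\phi_k=\frac{\partial\phi_k}{\partial\nu}\nu$ into the remaining pieces gives $\mathscr{L}(\phi_i\phi_j)=2\frac{\partial\phi_i}{\partial\nu}\frac{\partial\phi_j}{\partial\nu}T(\nu,\nu)$, $\langle\nabla\phi_i,T\nu\rangle=\frac{\partial\phi_i}{\partial\nu}T(\nu,\nu)$, and $\langle V,\nabla\phi_j\rangle=\frac{\partial\phi_j}{\partial\nu}\langle V,\nu\rangle$. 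Combining, the integrand becomes $(2-2-2)\frac{\partial\phi_i}{\partial\nu}\frac{\partial\phi_j}{\partial\nu}T(\nu,\nu)\langle V,\nu\rangle$, yielding \eqref{DP}.

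For the Neumann case $\alpha=1$, the condition $\mathscr{B}_1(\phi_i)=\langle T\nabla\phi_i,\nu\rangle=0$ on $\partial\Omega$ is equivalent (by symmetry of $T$) to $\langle\nabla\phi_i,T\nu\rangle=0$, so the two middle terms of the integrand vanish immediately. The fourth term also vanishes because $\langle\nabla(\phi_i\phi_j),T\nu\rangle=\phi_j\langle\nabla\phi_i,T\nu\rangle+\phi_i\langle\nabla\phi_j,T\nu\rangle=0$. Only the $\mathscr{L}(\phi_i\phi_j)\langle V,\nu\rangle$ contribution survives, which I rewrite via the Leibniz identity above as $2\bigl(T(\nabla\phi_i,\nabla\phi_j)-\lambda\phi_i\phi_j\bigr)\langle V,\nu\rangle$, producing \eqref{NP}.

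I do not expect a genuine obstacle here; the argument is a direct post-processing of formula \eqref{eq-prop3}. The only points requiring care are (i) recognising $\langle\nabla\phi_i,T\nu\rangle=T(\nabla\phi_i,\nu)$ via symmetry of $T$, so the Neumann hypothesis actually annihilates the right terms, and (ii) using the product rule for $\mathscr{L}$ together with the eigenvalue equation to express $\mathscr{L}(\phi_i\phi_j)$ in the form displayed in \eqref{NP}, rather than leaving it in its raw form.
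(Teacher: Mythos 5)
Your proposal is correct and matches the paper's approach: the paper's own proof consists precisely of the observation that \eqref{eq-prop3}, combined with the Leibniz rule for $\mathscr{L}$ and the decomposition $\nabla\phi_i=\nabla_{\partial\Omega}\phi_i+\frac{\partial\phi_i}{\partial\nu}\nu$ on $\partial\Omega$, yields \eqref{DP} and \eqref{NP}, and your write-up simply carries out that term-by-term specialization explicitly for each boundary condition.
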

\begin{proof}
It is enough to use \eqref{eq-prop3} together with the identities  
\begin{align*}
\mathscr{L}(\phi_i\phi_j)& = \phi_i\mathscr{L}\phi_j + \phi_j\mathscr{L}\phi_i + 2 T(\nabla \phi_i, \nabla\phi_j)
\end{align*}
and
\begin{align*}
    \nabla\phi_i&=\nabla_{\partial\Omega}\phi_i+\frac{\partial\phi_i}{\partial \nu}\nu\,\,\quad{on}\,\,\partial\Omega
\end{align*}
to obtain \eqref{DP} and \eqref{NP}.
\end{proof}

It is known that the set $\mathrm{Diff}^r(\Omega)$ of $\mathcal{C}^r$--diffeomorphisms of $\Omega$ is an affine submanifold of a Banach Space \cite{delfour}. The result below shows that the multiplicity of an eigenvalue can be reduced by small perturbation of the domain.

\begin{theorem}\label{thm2}
Let $(M,g)$ be a Riemannian manifold and let $\Omega\subset M$ be a bounded domain. If $\lambda$ is an eigenvalue with multiplicity $m>1$ of the problem
\begin{equation}\label{problem.domain}
\left\{\begin{array}{ccccc}
    -\mathscr{L}_g\phi&=&\lambda\phi & \mbox{in} &\Omega\\
    \mathscr{B}_\alpha \phi&=&0 & \mbox{on}& \partial \Omega,
\end{array}\right.
\end{equation}
where $\mathscr{B}_{\alpha}(\phi_i)=\alpha\langle T\nabla\phi_i,\nu_t\rangle+(1-\alpha)\phi_i$ for $\alpha\in\{0,1\}$,
then in every neighborhood of the identity $id_\Omega$ with respect to the $\mathcal{C}^r$ topology, there exist a diffeomorphism $f$ and a positive $\epsilon$ such that $|\lambda(f)-\lambda(id_\Omega)|< \epsilon$, and $\lambda(f)$ is simple. In particular, the subset of diffeomorphisms $\mathfrak{D}\subset \mathrm{Diff}^r({\Omega})$ that make the eigenvalues of \eqref{problem.domain} simple, is residual.
\end{theorem}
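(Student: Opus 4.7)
The plan is to follow the same scheme as the proof of Theorem~\ref{thm1}, replacing metric variation with diffeomorphism-induced domain variation and using the Hadamard-type formulas of Proposition~\ref{Hadamard for domains} in place of Proposition~\ref{pro bar-L}. First I would argue by contradiction: assume there is a neighborhood $U$ of $id_\Omega$ in $\mathrm{Diff}^r(\Omega)$ and an $\epsilon > 0$ such that every $f \in U$ with $|\lambda(f) - \lambda| < \epsilon$ yields an eigenvalue of multiplicity exactly $m$. For an arbitrary smooth vector field $V$ on a neighborhood of $\overline{\Omega}$, the flow $f_t$ of $V$ produces a smooth one-parameter family of diffeomorphisms; the domain-variation analog of Lemma~\ref{LemExist}, obtained from $g(t)=f_t^{*}g$ and $T_t=f_t^{*}T$ as in Proposition~\ref{pro domain}, furnishes analytic families $\lambda_i(t)$ and $\phi_i(t)$ satisfying the perturbed eigenvalue equation with $\lambda_i(0)=\lambda$, with the basis $\{\phi_i(0)\}$ of the $\lambda$-eigenspace $E_\lambda$ free to be prescribed. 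The constant multiplicity assumption forces $\lambda_i(t)\equiv\lambda$, so $(\lambda_i+\lambda_j)'(0)=0$ for all $i,j$.

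Plugging this into Proposition~\ref{Hadamard for domains} with $i\neq j$ gives, in the Dirichlet case $\alpha=0$,
\[
\int_{\partial\Omega}\frac{\partial \phi_i}{\partial \nu}\,\frac{\partial \phi_j}{\partial \nu}\,T(\nu,\nu)\,\langle V,\nu\rangle\,\dn=0
\]
for every $V$. Since $\langle V,\nu\rangle|_{\partial\Omega}$ realises every smooth function on $\partial\Omega$ and $T(\nu,\nu)>0$ by positive definiteness, this forces $\frac{\partial \phi_i}{\partial \nu}\frac{\partial \phi_j}{\partial \nu}\equiv 0$ on $\partial\Omega$ for every $L^2$-orthonormal basis $\{\phi_k\}$ of $E_\lambda$. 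Applying the identity to the rotated basis $\psi_1=\cos\theta\,\phi_i+\sin\theta\,\phi_j$, $\psi_2=-\sin\theta\,\phi_i+\cos\theta\,\phi_j$ and expanding yields
\[
\sin\theta\cos\theta\left[\left(\frac{\partial \phi_j}{\partial \nu}\right)^{2}-\left(\frac{\partial \phi_i}{\partial \nu}\right)^{2}\right]=0
\]
for all $\theta$, so both normal derivatives have equal absolute value and vanishing product, hence vanish identically on $\partial\Omega$. Combined with $\phi_i|_{\partial\Omega}=0$, this provides trivial Cauchy data and Aronszajn's unique continuation theorem yields $\phi_i\equiv 0$, a contradiction. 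The Neumann case $\alpha=1$ is handled analogously by reducing formula~\eqref{NP} to the pointwise identity $T(\nabla\phi_i,\nabla\phi_j)=\lambda\phi_i\phi_j$ on $\partial\Omega$ and extracting the contradiction through the rotation argument together with the boundary condition $T(\nabla\phi_i,\nu)=0$ and unique continuation.

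Finally, to conclude residuality I would mimic the closing Baire-category argument in the proof of Theorem~\ref{thm1}. Let $\mathfrak{D}_m\subset\mathrm{Diff}^r(\Omega)$ denote the subset for which the first $m$ eigenvalues of \eqref{problem.domain} are simple; continuity of eigenvalues in the $\mathcal{C}^r$ topology (see~\cite{bando}) makes each $\mathfrak{D}_m$ open, and the splitting argument above makes it dense. Since $\mathrm{Diff}^r(\Omega)$ is a Baire space as an open subset of an affine Banach submanifold (see~\cite{delfour}), $\mathfrak{D}=\bigcap_{m\geq 1}\mathfrak{D}_m$ is residual. The main obstacle I anticipate is the Neumann contradiction: because no Dirichlet trace is available, the unique continuation input has to be combined more carefully with the quadratic-form structure on $E_\lambda$ induced by $\phi\mapsto T(\nabla\phi,\nabla\phi)-\lambda\phi^{2}$ on $\partial\Omega$.
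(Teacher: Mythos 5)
Your proposal follows the same broad strategy as the paper (contradiction, pull the domain deformation back to a metric variation $g(t)=f_t^*g$, invoke Proposition~\ref{Hadamard for domains}, obtain pointwise boundary identities, apply unique continuation, and finish with a Baire-category argument), but with a few differences worth flagging.

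First, a small but genuine inaccuracy: constant multiplicity near $id_\Omega$ forces the $m$ analytic branches to coincide, $\lambda_1(t)=\cdots=\lambda_m(t)$, \emph{not} $\lambda_i(t)\equiv\lambda$; the perturbed eigenvalue can still move. In particular $(\lambda_i+\lambda_j)'(0)$ need not vanish for $i=j$. This does not damage the rest of your argument, because the step you actually use is that the off-diagonal entries of the bilinear form $B_V(u,v)=-2\int_{\partial\Omega}\tfrac{\partial u}{\partial\nu}\tfrac{\partial v}{\partial\nu}T(\nu,\nu)\langle V,\nu\rangle\,\dn$ on $E_\lambda$ vanish, and that survives; but the statement as written is false and should be corrected to "all $\lambda_i'(0)$ coincide."

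Second, your rotation argument needs one additional sentence of justification: the Hadamard formula is stated for the Kato-basis associated to each direction $V$, so you cannot simply plug in an arbitrary rotated pair. The correct bridge is that, under the contradiction hypothesis, the derivatives $\lambda_i'(0)$ are all equal, hence $B_V$ is a scalar multiple of the $L^2$ inner product on $E_\lambda$, hence its off-diagonal vanishes in \emph{every} orthonormal basis. Once you say this, the rotation expansion is valid and gives the stronger conclusion that $\partial\phi_i/\partial\nu$ vanishes identically on $\partial\Omega$. This is a genuine (if minor) refinement: the paper only argues that $\tfrac{\partial\phi_i}{\partial\nu}\tfrac{\partial\phi_j}{\partial\nu}=0$ everywhere, from which it deduces (via the closed-set/Baire observation) that one factor vanishes on some open subset of $\partial\Omega$, and then applies unique continuation. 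Both routes reach the Dirichlet contradiction.

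Third, for the Neumann case, the paper reduces to $T(\nabla\phi_i,\nabla\phi_j)=\lambda\phi_i\phi_j$ on $\partial\Omega$ and then invokes "Uhlenbeck's argument" (the integral-curve trick from Theorem~\ref{thm1}, here run on the compact hypersurface $\partial\Omega$ using that $T\nabla\phi_i$ is tangential by the boundary condition). You correctly identify this as the delicate step and do not complete it; the paper, for its part, also only sketches it. So you are in the same position as the paper there, but be aware that "handled analogously by unique continuation" undersells the work needed, and the quadratic-form remark you append is the right place to look.

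The closing residuality argument is identical to the paper's.
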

\begin{proof} Let $\lambda$ be an eigenvalue of multiplicity $m>1$ and suppose that, for all perturbation by diffeomorphism of $\Omega$, the multiplicity of $\lambda$ cannot be reduced. The proceeding follows the same lines as in proof of Theorem \ref{thm1}. 

For $\alpha=0$ case, from \eqref{DP} we have $\frac{\partial\phi_i}{\partial \nu}
\frac{\partial\phi_j}{\partial \nu} = 0$ on $\partial\Omega$. This way either $\frac{\partial\phi_i}{\partial \nu} = 0$ or $\frac{\partial\phi_j}
{\partial \nu} = 0 $ in some open set $U$ of $\partial \Omega$. If $\frac{\partial\phi_i}{\partial \nu} = 0$ in $U$, since $\phi_i = 0 $ on $\partial \Omega$,
it follows from the unique continuation principle \cite{homander} that $\phi_i = 0$ on $\Omega$, which is a contradiction.

For $\alpha=1$ case. From \eqref{NP} we have
\begin{align*}
    T(\nabla\phi_i,\nabla\phi_j)-\lambda\phi_i\phi_j=0
\end{align*}
on $\partial \Omega$. By the Uhlenbeck's argument once again we get a contradiction. It proves the first part of the theorem. The second part follows from the analogous argument as in the proof of Theorem~\ref{thm1}.
\end{proof}

\section{Application to extremal domains for the k--eigenvalue}\label{ApExtremal}

Before to claim the main results of this section we will begin with some definitions and remarks. Here, we will consider only analytic volume-preserving deformation of $\Omega\subset M$. Let $\mu_k(t)$ be the $k^{th}$--eigenvalue of $\mathscr{L}_t$ with Dirichlet boundary condition.

If $\Omega_t=f_t(\Omega)$ is an analytic volume-preserving deformation of $\Omega$, then it is not difficult to see that $v:=\big\langle \frac{d}{dt}\Big|_{t=0}f_t,\nu\big\rangle$ must satisfy $\int_{\partial\Omega}v\dn=0$. We denote $\mathcal{A}_0(\partial\Omega)$ the
set of all regular functions on $\partial\Omega$ such that $\int_{\partial\Omega}v\dn=0$. Soufi and Ilias proved that given a $v\in \mathcal{A}_0(\partial\Omega)$ there exists an analytic volume-preserving deformation $\Omega_t=f_t(\Omega)$ such that $v:=\big\langle \frac{d}{dt}\Big|_{t=0}f_t,\nu\big\rangle\in \mathcal{A}_0(\partial\Omega)$ (see \cite{soufi}). Recall that a domain $\Omega\subset M$ is a local minimizer (local maximizer) for the $k^{th}$--eigenvalue $\mu_k$ of $\mathscr{L}$ if for any analytic volume-preserving deformation $\Omega_t$, the function $t\mapsto \mu_k(t)$ admits a local minimum (local maximum) at $t=0$.

\begin{theorem}\label{thm3}
Let $k$ be a positive integer such that the $k^{th}$--eigenvalue  $\mu_k$ of $\mathscr{L}\phi=\dv_\eta(T\nabla\phi)$ with Dirichlet boundary condition satisfies $\mu_k>\mu_{k-1}$ (resp. $\mu_k<\mu_{k+1}$). If $\Omega$ is a local minimizer (resp. local maximizer) for $\mu_k$, then, it is simple and for some constant $c$ its associated eigenfuction $\phi$ satisfies
\begin{equation*}\left
|\frac{\partial\phi}{\partial\nu}\sqrt{T(\nu,\nu)}\right|=c\,\,\,\mbox{on}\,\,\, \partial\Omega.
\end{equation*}
\end{theorem}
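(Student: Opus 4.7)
The plan is to apply the Hadamard-type formula~\eqref{DP} for the Dirichlet problem, combined with the extremality hypothesis and the unique continuation principle. Let $m$ denote the multiplicity of $\mu_k$ and $E$ its eigenspace. For any $v\in\mathcal{A}_0(\partial\Omega)$, I invoke the result of El Soufi--Ilias~\cite{soufi} recalled above to produce an analytic volume-preserving deformation $\Omega_t=f_t(\Omega)$ whose initial normal velocity is $v$. Proposition~\ref{pro domain} then furnishes analytic branches $\lambda_i(t)$ of eigenvalues with $\lambda_i(0)=\mu_k$ and analytic orthonormal eigenfunctions $\phi_i(t)$ satisfying the Dirichlet condition for $i=1,\dots,m$.

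Step 1 (extremality kills the first-order terms). The separation $\mu_k>\mu_{k-1}$ (resp.\ $\mu_k<\mu_{k+1}$) together with continuity of the spectrum in $t$ ensures that for small $|t|$ the $k$-th eigenvalue equals $\min_i\lambda_i(t)$ (resp.\ $\max_i\lambda_i(t)$). Being a local minimum (resp.\ maximum) at $t=0$ plus analyticity of each $\lambda_i$ forces $\lambda_i'(0)\geq 0$ (resp.\ $\leq 0$); replacing $v$ by $-v\in\mathcal{A}_0(\partial\Omega)$ reverses the inequality, so $\lambda_i'(0)=0$ for every $i$. In view of~\eqref{DP}, this means that the symmetric bilinear form
\begin{equation*}
B_v(\phi,\psi):=-\int_{\partial\Omega}\frac{\partial\phi}{\partial\nu}\frac{\partial\psi}{\partial\nu}\,T(\nu,\nu)\,v\,\dn,\qquad \phi,\psi\in E,
\end{equation*}
is identically zero for every $v\in\mathcal{A}_0(\partial\Omega)$ (its eigenvalues in the basis $\{\phi_i(0)\}$ are exactly the $\lambda_i'(0)$). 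Since $\mathcal{A}_0(\partial\Omega)^{\perp}$ in $L^2(\partial\Omega,\dn)$ consists of constants, this forces
\begin{equation*}
\frac{\partial\phi}{\partial\nu}\frac{\partial\psi}{\partial\nu}T(\nu,\nu)=c(\phi,\psi)\quad\text{on }\partial\Omega
\end{equation*}
for some real constant $c(\phi,\psi)$ depending only on $\phi,\psi\in E$.

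Step 2 (rank argument for simplicity). At any fixed point $p\in\partial\Omega$, the bilinear form $B(\phi,\psi):=c(\phi,\psi)$ on $E$ factors as $B(\phi,\psi)=\ell_p(\phi)\ell_p(\psi)$ with $\ell_p(\phi)=\sqrt{T(\nu,\nu)(p)}\,(\partial\phi/\partial\nu)(p)$, hence has rank at most one on $E$. If $m\geq 2$, its radical has dimension at least $m-1\geq 1$, so there is $0\neq\phi_0\in E$ with $B(\phi_0,\phi_0)=0$; since $T(\nu,\nu)>0$, this gives $\partial\phi_0/\partial\nu\equiv 0$ on $\partial\Omega$. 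Together with $\phi_0|_{\partial\Omega}=0$, the unique continuation principle~\cite{homander} yields $\phi_0\equiv 0$, a contradiction. Therefore $m=1$, and applying Step~1 to the unique normalized eigenfunction $\phi$ gives $(\partial\phi/\partial\nu)^2 T(\nu,\nu)=c^2$ on $\partial\Omega$, which is the claimed identity.

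The main obstacle I anticipate is the careful passage from extremality of $\mu_k(t)$ to the simultaneous vanishing of all $\lambda_i'(0)$: it needs the separation hypothesis to identify $\mu_k(t)$ with the correct extremum over the cluster $\{\lambda_i(t)\}$, the analyticity of the branches to extract first-order vanishing from a one-sided inequality, and the closure of $\mathcal{A}_0(\partial\Omega)$ under $v\mapsto -v$ to upgrade the inequality to an equality; once $B_v\equiv 0$ is secured, the rank-one algebraic step and unique continuation are routine.
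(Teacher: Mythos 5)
Your proof is correct and follows essentially the same strategy as the paper's: produce the analytic branches $\lambda_i(t),\phi_i(t)$ associated with the volume-preserving deformation generated by $v\in\mathcal{A}_0(\partial\Omega)$, use the separation $\mu_k>\mu_{k-1}$ (resp.\ $\mu_k<\mu_{k+1}$) and extremality to force $\lambda_i'(0)=0$ for all $i$, feed this into formula~\eqref{DP}, identify $\mathcal{A}_0(\partial\Omega)^\perp$ with the constants to get that $\dfrac{\partial\phi}{\partial\nu}\dfrac{\partial\psi}{\partial\nu}T(\nu,\nu)$ is a global constant $c(\phi,\psi)$ on $\partial\Omega$ for $\phi,\psi\in E_k$, and finally invoke unique continuation.

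Where you genuinely diverge from the paper is in the step establishing simplicity. The paper works with the ``locally constant'' function $\dfrac{\partial\phi}{\partial\nu}\sqrt{T(\nu,\nu)}$ on each connected component of $\partial\Omega$, picks two eigenfunctions $\phi_1,\phi_2\in E_k$ and constants $(\alpha,\beta)\neq(0,0)$ so that $\partial(\alpha\phi_1+\beta\phi_2)/\partial\nu$ vanishes on at least one component, and then applies unique continuation; it then devotes a separate paragraph (the choice of $v$ supported on $\Sigma_1\cup\Sigma_2$) to showing the constant is the same on every component. You instead observe that the fixed bilinear form $B(\phi,\psi)=c(\phi,\psi)$ on $E_k$ factors through the single linear functional $\ell_p$ at any $p\in\partial\Omega$, so has rank at most one; for $m\geq 2$ its radical is nontrivial, producing $\phi_0\neq 0$ with $\partial\phi_0/\partial\nu\equiv 0$ on all of $\partial\Omega$, and unique continuation finishes. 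Your rank-one argument is a little cleaner: it bypasses the component-by-component bookkeeping entirely, and the separate $\Sigma_1,\Sigma_2$ paragraph in the paper becomes superfluous because global constancy of the squared normal derivative is already secured by the orthogonal-complement step. One small stylistic remark: your derivation of $\lambda_i'(0)=0$ via the replacement $v\mapsto -v$ is fine, but it is equivalent to simply noting that the analytic branch $\lambda_i(t)$ dominates the locally minimized $\mu_k(t)=\min_j\lambda_j(t)$ from above for $t$ of both signs, forcing a critical point of each $\lambda_i$ at $t=0$, which is how the paper phrases it.
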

\begin{proof}
Suppose $\mu_k>\mu_{k-1}$ and consider $v=\big\langle \frac{d}{dt}\Big|_{t=0}f_t,\nu\big\rangle\in\mathcal{A}_0(\partial\Omega)$ such that $\Omega_t=f_t(\Omega)$ is a volume-preserving analytic deformation of $\Omega$. Let $\{\lambda_{i}(t)\}$ and $\{\phi_{i}(t)\}$ be analytic families of eigenvalues and eigenfunctions given by Lemma~\ref{LemExist} such that  $\lambda_i(0)=\mu_k$. By continuity $\lambda_{i}(t)>\mu_{k-1}(t)$ for sufficiently small $t$, since $\lambda_{i}(0)=\mu_k>\mu_{k-1} $.
By hypothesis, the function $t\mapsto\mu_{k}(t)$ admits a local minimum at $t=0$. So, $\lambda_i(t)$ also admits a local minimum at $t=0$ and then $\left.\frac{d}{dt}\lambda_{i}(t)\right|_{t=0}=0$. Eq.~\eqref{DP} of Proposition~\ref{Hadamard for domains} give us
\begin{equation*}
\int_{\partial\Omega}v\left(\frac{\partial\phi}{\partial\nu}\right)^2T(\nu,\nu)d\mu=0
\end{equation*} 
for all $\phi\in E_k$ and $v\in\mathcal{A}_0(\partial\Omega)$, where $E_k$ is the eigenspace associated to $k^{th}$--eigenvalue $\mu_k$. Hence, $\frac{\partial\phi}{\partial\nu}\sqrt{T(\nu,\nu)} $ is locally constant on $\partial\Omega$ for any $\phi\in E_k$. Let  $\phi_1$ and $\phi_2$ be two
eigenfunctions in $E_k$, one can find a linear combination $\alpha\phi_1+\beta\phi_2=:\phi$ so that $\frac{\partial\phi}{\partial\nu}$ vanishes, at
least, on one connected component of $\Omega$. To this end, it  is sufficient to choose $\alpha$ and $\beta$ such that
\begin{equation*}
\alpha\frac{\partial\phi_1}{\partial\nu}\sqrt{T(\nu,\nu)}=-\beta\frac{\partial\phi_2}{\partial\nu}\sqrt{T(\nu,\nu)}.
\end{equation*}
Applying the principle of the unique continuation \cite{homander}, we deduce that $\phi$ is identically zero and that $\mu_k$ is simple.
To complete the proof, we must show that, for all $\phi\in E_k$,
\begin{equation*}
\left(\frac{\partial\phi}{\partial\nu}\right)^2T(\nu,\nu)
\end{equation*}
takes the same constant value on all the connected components of $\partial\Omega$. For it, let
$\Sigma_1$ and $\Sigma_2$ be two distinct connected components of $\partial\Omega$ and let $v\in\mathcal{A}_0(\partial\Omega)$ be the function given by
$v=vol(\Sigma_2)$ on $\Sigma_1$, $v=-vol(\Sigma_1)$ on $\Sigma_2$ and $v=0$ on the others components. Then the condition $\int_{\partial\Omega}v
\left(\frac{\partial\phi}{\partial\nu}\right)^2T(\nu,\nu)\dn=0$ implies that
\begin{equation*}
\left.\left(\frac{\partial\phi}{\partial\nu}\right)^2T(\nu,\nu)\right|_{\Sigma_1} =\left.\left(\frac{\partial\phi}{\partial\nu}\right)^2T(\nu,\nu)
\right|_{\Sigma_2}
\end{equation*}
which completes the proof of our assertion. Notice that the same arguments hold in the case $\mu_k<\mu_{k+1}$.
\end{proof}

\section{Applications to Ricci flow on closed manifold}\label{ApRF}

In this section, we study some properties of the eigenvalues of $\mathscr{L}_{g(t)}$ along the Ricci flow equation
\begin{align*}
    \frac{d}{dt}g(t)=-2Ric_{g(t)}
    \end{align*} 
on an $(n\geq 3)$-dimensional closed smooth manifold $M^n$, where $Ric_{g(t)}$ stands for the Ricci curvature of the Riemannian manifold $(M,g(t)).$ 

Hamilton~\cite{hamilton1} established the existence and the uniqueness of solutions to the Ricci flow equation in a maximal interval $[0,\delta),$ $\delta\leq +\infty$, for any given initial Riemannian metric $g_0=g(0).$ This maximal solution is then called the \emph{Ricci flow} with initial condition $g_0,$ and $\delta$ (whenever finite) is called the \emph{blow-up time} of the flow.

We start by observing that the eigenvalues of $\mathscr{L}_{g(t)}(\cdot)= \dv_{\eta}(T_{g(t)}\nabla(\cdot))$ can be parametrized  $\mathcal{C}^2$--differentiable in $t$, see Theorem $(C)$ in \cite{kriegl}. Now, we derive a general evolution formula for the eigenvalues of $\mathscr{L}_{g(t)}$ along the Ricci flow.

\begin{proposition}\label{prop var eigenv rf}
If $\lambda(t)$ denotes the evolution of an eigenvalue of $\mathscr{L}_{g(t)}$ along the Ricci flow on $M^n$, then
\begin{equation}\label{EQ5not normali}
\lambda'=\int_{M}R(\lambda u^{2}-T(\nabla u,\nabla u))\dm + \int_{M}\Big[4\ricci(T\nabla u,\nabla u) + T'(\nabla u,\nabla u)\Big]\dm,
\end{equation}
where $u$ stands for an eigenfunction associated to the eigenvalue $\lambda$, and $R$ is the scalar curvature.
\end{proposition}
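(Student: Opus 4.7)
The plan is to specialize the Hadamard-type identity~\eqref{eq.LB} from Proposition~\ref{pro bar-L} (whose derivation used only integration by parts, and so applies verbatim on a closed manifold, where the boundary terms in that derivation are vacuous) to the Ricci flow setting.

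By the cited differentiability result of Kriegl, I may work with a $\mathcal{C}^{2}$--family $(\lambda(t),u(t))$ satisfying $-\mathscr{L}_{g(t)}u(t)=\lambda(t)u(t)$ and normalized by $\int_M u(t)^{2}\dmt=1$. Since $\eta$ is held fixed along the flow, $\dot{\eta}\equiv 0$. Setting $i=j=1$ and $\phi_{1}=u$ in~\eqref{eq.LB} gives
\begin{equation*}
2\lambda'=\int_{M}\Big(\tfrac{h}{2}\,\mathscr{L}(u^{2})+2\mathscr{H}_{T}(\nabla u,\nabla u)\Big)\dm.
\end{equation*}

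The Ricci flow contributes $H_{ij}=\tfrac{d}{dt}g_{ij}=-2R_{ij}$, hence $h=g^{ij}H_{ij}=-2R$. Using $\mathscr{H}_{T}=T'-(TH+HT)$, I compute
\begin{equation*}
\mathscr{H}_{T}(\nabla u,\nabla u)=2T\ricci(\nabla u,\nabla u)+2\ricci T(\nabla u,\nabla u)+T'(\nabla u,\nabla u).
\end{equation*}
The self-adjointness of both $T$ and $\ricci$ with respect to $g$ yields $T\ricci(X,X)=\langle\ricci X,TX\rangle=\ricci(TX,X)$, and analogously for $\ricci T$, so the two commutator terms collapse to $4\ricci(T\nabla u,\nabla u)$. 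For the remaining summand, the Leibniz rule stated in Section~\ref{prem}, combined with the eigenvalue equation, gives
\begin{equation*}
\mathscr{L}(u^{2})=2u\mathscr{L}u+2T(\nabla u,\nabla u)=-2\lambda u^{2}+2T(\nabla u,\nabla u),
\end{equation*}
so $\tfrac{h}{2}\,\mathscr{L}(u^{2})=2\lambda R u^{2}-2R\,T(\nabla u,\nabla u)$.

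Substituting both computations into the displayed identity and dividing by $2$ produces exactly~\eqref{EQ5not normali}. The only non-routine step is the symmetry argument that lets me replace the non-commuting composition $T\ricci+\ricci T$, evaluated on the diagonal, by $2\ricci(T\cdot,\cdot)$; once that is in place, everything else is direct substitution into the master formula~\eqref{eq.LB}, so no further obstacle is expected.
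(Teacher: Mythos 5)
Your proof is correct and follows essentially the same route as the paper: both specialize formula~\eqref{eq.LB} with $i=j$ to the Ricci flow, substitute $H=-2\ricci$ (so $h=-2R$), expand $\mathscr{H}_T$ as $T'+2(T\ricci+\ricci T)$, reduce the commutator terms to $4\ricci(T\nabla u,\nabla u)$ by symmetry, and use the product rule $\mathscr{L}(u^2)=2u\mathscr{L}u+2T(\nabla u,\nabla u)$ together with $\mathscr{L}u=-\lambda u$. The only difference is presentational: you spell out the self-adjointness argument justifying $TH(\nabla u,\nabla u)=HT(\nabla u,\nabla u)=-2\ricci(T\nabla u,\nabla u)$, which the paper merely asserts.
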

\begin{proof}
Taking $H=-2\ricci_{g(t)}$ we get
\begin{equation*}
TH(\nabla u,\nabla u)=HT(\nabla u,\nabla u)=-2Ric(T\nabla u,\nabla u).
\end{equation*}
Thus, by definition of $\mathscr{H}_T$ we have
\begin{equation*}
\mathscr{H}_T(\nabla u,\nabla u)=4Ric(T\nabla u,\nabla u)+T'(\nabla u,\nabla u).
\end{equation*}
Since $h=-2R$ and $\mathscr{L}u^{2}= 2u\mathscr{L}u+2T(\nabla u,\nabla u)$, we obtain from \eqref{eq.LB}
\begin{eqnarray*}
\lambda'(t)&=&\int_M\frac{h}{4}\mathscr{L}u^{2}+\mathscr{H}_T(\nabla u,\nabla u)\dm\\
&=& \int_M R(\lambda u^2-T(\nabla u,\nabla u))\dm+\int_M4Ric(T\nabla u,\nabla u) +T'(\nabla u,\nabla u)\dm,
\end{eqnarray*}
which is \eqref{EQ5not normali}.
\end{proof}

Let us now consider the behavior of the spectrum when we evolve an initial metric that is homogeneous. Hamilton showed that the Ricci flow preserves the isometries of the initial Riemannian manifold. Hence, the evolving metric remains homogeneous along the flow. This important observation implies the following:

\begin{theorem}\label{rem lm Ricci}
Let $\lambda(t)$ be the evolution of an eigenvalue of $\mathscr{L}_{g(t)}$ along the Ricci flow on a closed homogeneous Riemannian manifold $M^n$. If $T'\geq -4Ric(T,\cdot)$, then $\lambda(t)$ is non-decreasing along the flow.
\end{theorem}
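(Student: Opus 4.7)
The plan is to feed Proposition~\ref{prop var eigenv rf} directly and exploit the two special features of the hypotheses: (i) the scalar curvature $R$ of a homogeneous Riemannian manifold is constant, and (ii) the tensor inequality $T'\ge -4\ricci(T,\cdot)$ forces the remaining part of the variation formula to be non-negative.

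First, I would recall the evolution identity from Proposition~\ref{prop var eigenv rf}:
\begin{equation*}
\lambda'(t)=\int_{M}R\bigl(\lambda u^{2}-T(\nabla u,\nabla u)\bigr)\dm + \int_{M}\bigl[4\ricci(T\nabla u,\nabla u) + T'(\nabla u,\nabla u)\bigr]\dm,
\end{equation*}
where $u=u(t)$ is an $L^2(M,\dm_t)$--normalized eigenfunction associated with $\lambda(t)$. Since $M$ is homogeneous and the Ricci flow preserves isometries (Hamilton), $g(t)$ remains homogeneous for every $t$, so $R=R(t)$ is a spatial constant. Hence the first integral factors as
\begin{equation*}
R\int_{M}\bigl(\lambda u^{2}-T(\nabla u,\nabla u)\bigr)\dm.
\end{equation*}

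Next, I would show that the integral inside vanishes. Using $\mathscr{L}_{g(t)}u=-\lambda u$, the integration by parts formula~\eqref{Formula-IP} on the closed manifold $M$ (no boundary term) gives
\begin{equation*}
\int_{M}T(\nabla u,\nabla u)\dm = -\int_{M}u\,\mathscr{L}_{g(t)}u\,\dm = \lambda\int_{M}u^{2}\dm = \lambda,
\end{equation*}
so $\int_{M}\bigl(\lambda u^{2}-T(\nabla u,\nabla u)\bigr)\dm=0$. Therefore the curvature term contributes nothing, and the evolution reduces to
\begin{equation*}
\lambda'(t) = \int_{M}\bigl[4\ricci(T\nabla u,\nabla u) + T'(\nabla u,\nabla u)\bigr]\dm.
\end{equation*}

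Finally, interpreting the hypothesis $T'\ge -4\ricci(T,\cdot)$ as the pointwise quadratic form inequality $T'(X,X)+4\ricci(TX,X)\ge 0$ for all $X\in\mathfrak{X}(M)$, and evaluating at $X=\nabla u$, makes the integrand non-negative. Consequently $\lambda'(t)\ge 0$, which is the desired monotonicity. The only mildly subtle step is the integration-by-parts normalization, which requires that the curve of eigenfunctions $u(t)$ be kept unit-normalized with respect to $\dm_t$; this is legitimate because the variation formula~\eqref{EQ5not normali} was derived under precisely that convention.
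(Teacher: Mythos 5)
Your proof is correct and follows essentially the same route as the paper: apply Proposition~\ref{prop var eigenv rf}, use homogeneity (preserved by the flow) to make $R$ a spatial constant, use the integration-by-parts identity~\eqref{Formula-IP} to show the $R$-term vanishes on the closed manifold, and conclude from the sign hypothesis on $T'+4\ricci(T,\cdot)$. The paper states these steps more tersely but the argument is the same.
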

\begin{proof}
Since the evolving metric remains homogeneous along the flow, we have that $R$ is constant for all time. So, from equation \eqref{Formula-IP} and Proposition~\ref{prop var eigenv rf} we deduce that
\begin{equation*}
\lambda'=\int_{M} 4Ric(T\nabla u,\nabla u) + T'(\nabla u,\nabla u)\dm
\end{equation*}
and the result follows from the assumption that $T'\geq -4Ric(T,\cdot)$.
\end{proof}

\begin{example}
Let $g(t)$ be a solution to the Ricci flow on a closed homogeneous Riemannian manifold $M^n$ such that the initial Riemannian metric has strictly positive Ricci curvature and it continues so for all time. Consider $T_t=\psi g(t)$, for some positive smooth function $\psi$ on $M^n$. Then, $T'> -4Ric(T,\cdot)$ and hence the eigenvalue $\lambda(t)$ of $\dv_{\eta}(\psi\nabla(\cdot))$ is non-decreasing along the flow. 
\end{example}

For a three-dimensional closed smooth manifold, Hamilton also proved that if the initial Riemannian metric has strictly positive Ricci curvature, then it continues so for all time.

\begin{theorem}\label{thmMono}
Let $g(t)$ be a solution to the Ricci flow on a three-dimensional closed Riemannian manifold $M^3$ with initial Ricci curvature that is strictly positive. Then, there exist $t_0\in[0,\delta)$ such that the eigenvalue $\lambda(t)$ of  $\dv_{\eta}(\psi\nabla(\cdot))$ increase for $t\in[t_{0},\delta).$
\end{theorem}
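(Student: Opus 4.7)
The plan is to apply the general evolution formula \eqref{EQ5not normali} of Proposition~\ref{prop var eigenv rf} to the special operator $\dv_\eta(\psi\nabla\cdot)$ and show, with the help of Hamilton's curvature pinching for three-dimensional Ricci flow with positive Ricci curvature, that the right-hand side becomes strictly positive near the singular time~$\delta$.

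First I would specialize \eqref{EQ5not normali} with $T_t = \psi\, g(t)$ (the function $\psi$ being time-independent and positive). Since $T\nabla u = \psi\nabla u$, $T(\nabla u,\nabla u) = \psi|\nabla u|^2$, and $T' = \psi\, g'(t) = -2\psi\,\ricci$ under the Ricci flow, the bracketed term in \eqref{EQ5not normali} collapses via $4\,\ricci(T\nabla u,\nabla u) + T'(\nabla u,\nabla u) = 2\psi\,\ricci(\nabla u,\nabla u)$. Normalizing $\int_M u^2\,\dm = 1$, then multiplying the eigenvalue equation by $u$ and integrating by parts over the closed manifold, yields $\int_M \psi|\nabla u|^2\,\dm = \lambda$; so \eqref{EQ5not normali} reduces to
$$\lambda'(t) = \int_M R\,\lambda u^2\,\dm - \int_M R\,\psi|\nabla u|^2\,\dm + 2\int_M \psi\,\ricci(\nabla u,\nabla u)\,\dm.$$

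Next I would invoke three facts from Hamilton's analysis of the $3$-dimensional Ricci flow with $\ricci(g_0) > 0$: $(i)$ positivity of Ricci is preserved along the flow, so $R > 0$ on $M\times[0,\delta)$; $(ii)$ Hamilton's pinching yields, for every $\epsilon > 0$, some $t_1\in[0,\delta)$ with $\ricci \geq \tfrac{1-\epsilon}{3}R\, g$ pointwise on $M\times[t_1,\delta)$; and $(iii)$ the normalized Ricci flow converges to a metric of constant positive sectional curvature, which translates into $r(t) := R_{\min}(t)/R_{\max}(t) \to 1$ as $t\to\delta$. Using $(ii)$ pointwise in the Ricci integral, combined with the trivial bounds $\int_M R\lambda u^2\,\dm \geq R_{\min}\lambda$ and $\int_M R\psi|\nabla u|^2\,\dm \leq R_{\max}\lambda$, an elementary rearrangement gives
$$\lambda'(t) \geq \lambda(t)\, R_{\max}(t)\Big[r(t) - \tfrac{1 + 2\epsilon(t)}{3}\Big], \qquad \epsilon(t)\to 0 \text{ as } t\to\delta.$$

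Finally, by $(iii)$ and the smallness of $\epsilon(t)$, the bracket converges to $1 - \tfrac{1}{3} = \tfrac{2}{3} > 0$ as $t\to\delta$, hence is strictly positive on some terminal interval $[t_0,\delta)$; together with $R_{\max}>0$ and the nontrivial case $\lambda > 0$ (the constant-eigenfunction case $\lambda\equiv 0$ makes the conclusion trivial via $\nabla u = 0$), this forces $\lambda'(t) > 0$ on $[t_0,\delta)$, proving monotonicity. The main obstacle is the appeal to Hamilton's global roundness: pointwise pinching only controls $\ricci/R$ at each point, so to obtain the global bracket above one must genuinely invoke the normalized-flow convergence in order to bridge pointwise and global scalar curvature bounds rather than relying on pinching alone.
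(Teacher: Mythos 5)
Your argument follows the paper's proof essentially step for step: specialize the Hadamard formula~\eqref{EQ5not normali} to $T=\psi g$ to get $\lambda'=\lambda\int_M Ru^2\dm-\int_M R\psi|\nabla u|^2\dm+2\int_M\psi\ricci(\nabla u,\nabla u)\dm$, bound the Ricci term by Hamilton's pointwise pinching and the scalar-curvature terms by $R_{\min}$ and $R_{\max}$ via $\int_M\psi|\nabla u|^2\dm=\lambda$, then invoke $R_{\max}(t)/R_{\min}(t)\to 1$ (Hamilton's Theorem~15.1) to make the resulting bracket positive on a terminal interval $[t_0,\delta)$. The paper works with the fixed preserved pinching $\ricci\geq\varepsilon Rg$ with $0<\varepsilon\leq 1/2$ (Hamilton's Theorem~9.4) rather than your asymptotically improving $\ricci\geq\tfrac{1-\epsilon(t)}{3}Rg$, but the two lead to the same terminal inequality, and the ``obstacle'' you identify at the end is resolved in the paper exactly as you suggest, by combining pointwise pinching with the global ratio convergence of Theorem~15.1.
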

\begin{proof}
Let $g(t)$ be a solution to the Ricci flow on a closed three--manifold $M$ with strictly positive initial Ricci curvature, then $R>0$ and there is $\varepsilon>0$ such that $Ric\geq\varepsilon Rg$, and we can assume $2\varepsilon-1\leq0$. So both conditions remain valid on $0\leq t<\delta$ (see \cite[Theorem~9.4]{hamilton1}), and then by \eqref{EQ5not normali}
\begin{align*}
\lambda'(t)&= \lambda\int_{M}u^{2}R\dm -\int_{M}R\psi|\nabla u|^{2}\dm +2\int_{M}\psi Ric(\nabla u,\nabla u)\dm\\
           &\geq\lambda R_{\min}(t) -\int_{M}R\psi|\nabla u|^{2}\dm+2\varepsilon\int_{M}R\psi|\nabla u|^{2}\dm\\
           &\geq\lambda(R_{\min}(t)+(2\varepsilon-1)R_{\max}(t)).
\end{align*}
By Theorem~15.1 in~\cite{hamilton1}, we have $\frac{R_{\max}(t)}{R_{\min}(t)}\rightarrow 1$ as $t\rightarrow \delta$. Then there is $t_0\geq0$ so that
\[-(2\varepsilon -1) < \frac{R_{\min}(t)}{R_{\max}(t)}\leq 1 \qquad \forall t_0\leq t< \delta,\]
from which we get $\lambda'(t)> 0$ for all $t\in[t_0,\delta)$, which proves our theorem.
\end{proof}

Let $\drift u=\dv_{\eta}(\nabla u)$ be the drifted Laplacian and $Ric_\eta:=Ric+\nabla^2 \eta$ be the Bakry-\'Emery-Ricci tensor. From the classical Bochner's formula, we obtain
\begin{equation}\label{eq bochner drifiting}
\frac{1}{2}\drift|\nabla u|^2=Ric_\eta(\nabla u,\nabla u)+\langle \nabla(\drift u),\nabla u\rangle+|\nabla^2 u|^2.
\end{equation}
Besides, it is true that
\begin{align*}
\langle \nabla\psi,\nabla |\nabla u|^2 \rangle = 2\langle \nabla_{\nabla u} \nabla u, \nabla\psi \rangle
= 2(\nabla u)\langle \nabla \psi, \nabla u \rangle -2\nabla^2\psi(\nabla u, \nabla u),
\end{align*}
for any smooth function $\psi$ on $M$. So,
\begin{align}
 \psi \drift|\nabla u|^2 &= \mathscr{L}|\nabla u|^2-\langle \nabla\psi,\nabla |\nabla u|^2 \rangle \nonumber\\
         &=  \mathscr{L}|\nabla u|^2 -2(\nabla u)\langle \nabla \psi, \nabla u \rangle +2\nabla^2\psi(\nabla u, \nabla u) \label{eq boc d1}.
\end{align}
But,
\begin{align}
(\nabla u)\langle \nabla \psi, \nabla u \rangle
       &= \langle \nabla u,\nabla \langle\nabla u,\nabla\psi \rangle \rangle \nonumber\\
       &= \langle \nabla u,\nabla(\mathscr{L}u)\rangle -\langle \nabla u, \nabla(\psi \drift u)\rangle\nonumber\\
       &= \dv_{\eta}(\mathscr{L}u\nabla u) -(\mathscr{L}u)\drift u -\dv_{\eta}\big(\psi\drift u\nabla u\big) +\psi(\drift u)^2 \label{eq boc d2}
\end{align}
and
\begin{equation}\label{eq boc d3}
\psi\langle \nabla(\drift u),\nabla u \rangle = \dv_{\eta}\big(\psi\drift u\nabla u\big) -(\mathscr{L}u)\drift u.
\end{equation}
Joining \eqref{eq bochner drifiting}, \eqref{eq boc d1}, \eqref{eq boc d2} and \eqref{eq boc d2}, we get the Bochner type formula for the operator
$\mathscr{L}u=\dv_\eta(\psi\nabla u)$ as follows
\begin{eqnarray}\label{eq boc psi g}
\frac{1}{2}\mathscr{L}|\nabla u|^2 &=&\dv_{\eta}(\mathscr{L}u\nabla u) +\psi \ricci_\eta(\nabla u,\nabla u) -2(\mathscr{L}u)\drift u +\psi(\drift u)^2
\nonumber\\ 
&&+\psi|\nabla^2 u|^2 -\nabla^2\psi(\nabla u, \nabla u).
\end{eqnarray}
This formula will be useful in the proof of our next result.

\begin{remark}
Alencar, Neto and Zhou~\cite{AGD} showed a Bochner type formula for the operator that has been introduced by Cheng and Yau~\cite{Cheng-Yau}. 
The Bochner type formula for the more general expression of $\mathscr{L}$ has been proved by Gomes and Miranda~\cite{miranda}.    
\end{remark}

We know that in the hypothesis of Theorem~\ref{thmMono} the solution to the Ricci flow becomes extinct in finite time, in particular, $\lim\limits_{t\rightarrow \delta}R_{\min}(t)=\infty$.
\begin{theorem}\label{AAAE}
Let $g(t)$ be the solution to the Ricci flow on a three-dimensional closed Riemannian manifold $M^3$ with strictly positive initial Ricci curvature and $\lambda(t)$ the evolution of an eigenvalue of $\dv_\eta(\psi\nabla(\cdot))$, with $\psi\geq c$ for some constant $c>0$. If $\nabla^2\psi\leq0$, then
\begin{equation*}
\lim\limits_{t\rightarrow \delta}\lambda(t)=\infty.
\end{equation*}
\end{theorem}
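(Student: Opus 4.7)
My approach is to exploit the weighted Bochner identity \eqref{eq boc psi g} at each fixed $t$ to produce a pointwise lower bound of the form $\lambda(t)\ge c\,k(t)$ with $k(t)\to\infty$, and then to close via Hamilton's singularity analysis for three-dimensional Ricci flow with positive Ricci curvature. The sign hypothesis $\nabla^{2}\psi\le 0$ is precisely what allows me to discard an unsigned term in the integrated identity.

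Fix $t$ and let $u=u(t)$ be an eigenfunction for $\lambda(t)$, normalized by $\int_{M}u^{2}\dm=1$. Integrating \eqref{eq boc psi g} over the closed manifold $M$ against $\dm$ annihilates both the left-hand side and the $\dv_{\eta}$--term. Substituting $\mathscr{L}u=-\lambda u$ together with the elementary identity $\psi\,\drift u=\mathscr{L}u-\langle\nabla\psi,\nabla u\rangle=-\lambda u-\langle\nabla\psi,\nabla u\rangle$ into the combination $-2(\mathscr{L}u)\drift u+\psi(\drift u)^{2}$ simplifies everything to
\[
\lambda^{2}\!\int_{M}\frac{u^{2}}{\psi}\dm
=\int_{M}\psi\,\ricci_{\eta}(\nabla u,\nabla u)\dm
+\int_{M}\frac{\langle\nabla\psi,\nabla u\rangle^{2}}{\psi}\dm
+\int_{M}\psi|\nabla^{2}u|^{2}\dm
-\int_{M}\nabla^{2}\psi(\nabla u,\nabla u)\dm.
\]
Since $\nabla^{2}\psi\le 0$, the final term has a favourable sign; combined with the manifest positivity of the two middle terms this yields the one-sided bound $\lambda^{2}\!\int_{M}u^{2}/\psi\,\dm\ge\int_{M}\psi\,\ricci_{\eta}(\nabla u,\nabla u)\dm$. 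Using $\psi\ge c$ on the left, and on the right both the integration-by-parts identity $\int_{M}\psi|\nabla u|^{2}\dm=\lambda$ and a pointwise lower bound $\ricci_{\eta}\ge k(t)g$, this collapses to $\lambda(t)\ge c\,k(t)$.

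To force $k(t)\to\infty$ I would invoke two classical estimates of Hamilton on a closed three-manifold with strictly positive initial Ricci: the pinching estimate $\ricci\ge\varepsilon Rg$ with some $\varepsilon>0$ is preserved by the flow (see \cite[Thm.~9.4]{hamilton1}), and the evolution of $R$ together with the maximum principle yields $R_{\min}(t)\ge 3/\bigl(2(\delta-t)\bigr)\to\infty$. Taking $k(t):=\varepsilon R_{\min}(t)-\|\nabla^{2}\eta\|_{g(t),\mathrm{op}}$ then gives $k(t)\to\infty$, hence $\lambda(t)\to\infty$.

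The delicate point is the control of the Bakry--\'Emery correction $\nabla^{2}\eta$ in the shrinking metric $g(t)$: a priori its operator norm can blow up at the same order as $R_{\min}(t)$, and one must verify that Ricci wins the comparison. This is vacuous when $\eta$ is spatially constant; in the general case one invokes Hamilton's normalized convergence $g(t)/(\delta-t)\to g_{\infty}$ to a round metric on $S^{3}$, which identifies the two blow-up rates and lets the effective pinching constant $\varepsilon$ be upgraded to the isotropic value $1/3$ in the limit, so that Ricci strictly dominates $\|\nabla^{2}\eta\|_{g(t),\mathrm{op}}$ for $t$ close enough to $\delta$. Aside from this subtlety, the algebra leading to the central identity is a routine expansion once the substitutions $\mathscr{L}u=-\lambda u$ and $\psi\,\drift u=-\lambda u-\langle\nabla\psi,\nabla u\rangle$ are in hand.
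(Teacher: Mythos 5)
Your integrated Bochner identity is correct, and your opening move coincides with the paper's: integrate \eqref{eq boc psi g} over the closed manifold so that both $\dv_{\eta}$--terms vanish, use $\mathscr{L}u=-\lambda u$, and invoke $\nabla^{2}\psi\le 0$. Where you diverge is in the algebra. By substituting $\psi\,\drift u=-\lambda u-\langle\nabla\psi,\nabla u\rangle$ into the combination $-2(\mathscr{L}u)\drift u+\psi(\drift u)^{2}$ you obtain an \emph{exact} identity with a manifestly nonnegative remainder, namely $\lambda^{2}\int_M u^{2}/\psi\,\dm=\int_M\psi\,\ricci_{\eta}(\nabla u,\nabla u)\dm+\int_M\langle\nabla\psi,\nabla u\rangle^{2}/\psi\,\dm+\int_M\psi|\nabla^{2}u|^{2}\dm-\int_M\nabla^{2}\psi(\nabla u,\nabla u)\dm$, and then discard the last three terms. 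The paper instead estimates $2\int_M(\mathscr{L}u)\drift u\,\dm=2\lambda\int_M|\nabla u|^{2}\dm\le 2\lambda^{2}c^{-1}$ directly and drops $-\psi(\drift u)^{2}-\psi|\nabla^{2}u|^{2}$ separately; your route keeps the Bakry--\'Emery tensor $\ricci_{\eta}$ intact, is cleaner, and tightens the bound to $\lambda^{2}c^{-1}\ge\int_M\psi\,\ricci_{\eta}(\nabla u,\nabla u)\dm$, a factor of $2$ better than the paper's. Up to this point the two arguments are genuinely parallel and both correct.

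The gap is in your final paragraph, and it is real. You correctly identify that the argument needs $k(t):=\varepsilon R_{\min}(t)-\|\nabla^{2}\eta\|_{g(t),\mathrm{op}}\to\infty$, but your proposed fix via Hamilton's round-sphere convergence does not deliver it. Write $g(t)=(\delta-t)\hat g(t)$ with $\hat g(t)\to g_{\infty}$ round. Since Christoffel symbols are invariant under constant rescaling, $\nabla^{2}_{g(t)}\eta=\nabla^{2}_{\hat g(t)}\eta$ as a $(0,2)$--tensor, and therefore $\|\nabla^{2}\eta\|_{g(t),\mathrm{op}}=(\delta-t)^{-1}\|\nabla^{2}_{\hat g(t)}\eta\|_{\hat g(t),\mathrm{op}}$ blows up at exactly the rate $(\delta-t)^{-1}$, which is \emph{the same} rate as $R_{\min}(t)$. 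Upgrading $\varepsilon$ to the isotropic value $1/3$ therefore does not make Ricci dominate: in the rescaled limit the comparison is between $R_{g_{\infty}}/3$ and $\|\nabla^{2}_{g_{\infty}}\eta\|_{g_{\infty},\mathrm{op}}$, two finite constants of which neither is a priori larger for a general $\eta$. So the round-point argument does not resolve the subtlety, and as written the proof does not establish $k(t)\to\infty$. For what it is worth, the paper's own proof has the same lacuna: it asserts $\nabla^{2}\eta(\nabla u,\nabla u)\ge d(t)|\nabla u|^{2}$ with $d(t)>0$, which is an unstated convexity hypothesis on $\eta$ rather than a consequence of what has been assumed.
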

\begin{proof}
Integrating \eqref{eq boc psi g} gives
\begin{eqnarray*}
\int_M\psi Ric(\nabla u,\nabla u)\dm
&=& 2\int_M(\mathscr{L}u)\drift u\dm -\int_M\psi(\drift u)^2\dm -\int_M\psi|\nabla^2u|^2\dm\nonumber\\
&&  +\int_M\nabla^2\psi(\nabla u,\nabla u)\dm -\int_M\psi\nabla^2\eta(\nabla u,\nabla u)\dm\\
&\leq& -2\lambda\int_Mu(\drift u)\dm -\int_M\psi\nabla^2\eta(\nabla u,\nabla u)\dm \\
&=& 2\lambda\int_M|\nabla u|^2\dm-\int_M\psi\nabla^2\eta(\nabla u,\nabla u)\dm\\
&\leq& 2\lambda c^{-1}\int_M\psi|\nabla u|^2\dm-\int_M\psi\nabla^2\eta(\nabla u,\nabla u)\dm\\
&=& 2\lambda^2c^{-1}-\int_M\psi\nabla^2\eta(\nabla u,\nabla u)\dm.
\end{eqnarray*}

We already know that for any solution of the Ricci flow on a closed three-manifold with positive Ricci curvature, there exists $\epsilon>0$ such that
$\ricci\geq\epsilon Rg$ is preserved along the flow. Thus
\begin{equation*}
2\lambda^2c^{-1}-\int_M\psi\nabla^2\eta(\nabla u,\nabla u)\dm \geq \epsilon \int_MR\psi|\nabla u|^2\dm \geq \epsilon R_{\min}\lambda.
\end{equation*}
Now, note that $\nabla^2\eta(\nabla u,\nabla u)\geq d |\nabla u|^2$, for some constant $d(t)>0$, and then
\begin{equation*}
\lambda(t)\geq c(\epsilon R_{\min}(t)+d(t))/2.
\end{equation*}
Since
\begin{equation*}
\lim\limits_{t\rightarrow \delta}R_{\min}(t)=\infty,
\end{equation*}
the proof is complete.
\end{proof}

\textbf{Acknowledgements:} The first author gratefully acknowledges financial support from the Fundação de Amparo à Pesquisa do Estado do Amazonas (FAPEAM) through a doctoral scholarship. The second author has been partially supported by the Conselho Nacional de Desenvolvimento Científico e Tecnológico (CNPq), and by the Fundação de Amparo à Pesquisa do Estado de São Paulo (FAPESP), Grants 2022/16097-2, 2023/11126-7 and 2024/00923-6. The third author was also partially supported by FAPESP, Grants 2016/10009-3 and 2020/14075-6. The authors thank D. Tsonev for valuable comments, discussions and encouragement, and the referee for the careful reading and for the useful comments that improved the paper.

\end{document}